\newtheorem{thm}{Theorem}[section]
\newtheorem{lem}[thm]{Lemma}
\newtheorem{prop}[thm]{Proposition}
\theoremstyle{definition}
\theoremstyle{remark}
\newtheorem{rem}[thm]{Remark}
\title{Statistical Inference in Fractional Poisson Ornstein-Uhlenbeck Process.}
\author{H\'ector Araya\thanks{CIMFAV, Facultad de Ingenier\'ia, Universidad de Valpara\'iso. Email:  hector.araya@postgrado.uv.cl} \and Natalia Bahamonde\thanks{Instituto de Estad\'istica, Pontificia Universidad Cat\'olica de Valpara\'iso, Chile. Email: natalia.bahamonde@pucv.cl.} \and Tania Roa \thanks{CIMFAV, Facultad de Ingenier\'ia, Universidad de Valpara\'iso. Email: tania.roa@postgrado.uv.cl} \and Soledad Torres\thanks{CIMFAV, Facultad de Ingenier\'ia, Universidad de Valpara\'iso. Email: soledad.torres@uv.cl} 
}
\begin{document}
\maketitle

\abstract{

In this article, we study the problem of parameter estimation for a discrete Ornstein - Uhlenbeck model driven by Poisson fractional noise. Based on random walk approximation for the noise, we study least squares and maximum likelihood estimators. Thus, asymptotic behaviours of the estimator is carried out, and a simulation study is shown to illustrate our results.}\\
%%%%%%%%%%%%%%%%%%%%%%%%%%%%%%%%%%%%%%%%%%%%%%%%%%%%%%%%%%%%%%%%%%%%%%%%%%%%%%%%%%%%%%%%%%%%%%%%%%%%%%%%%%%%%%%

\textbf{Key words}:  fractional Poisson process, long memory,
least square estimator, maximum likelihood estimator.
%**********************************************************
\section{Introduction}
%\justify
The fractional Poisson process introduced by Laskin in 2003 \cite{Las} is a counting, non-Gaussian long-memory process constructed as a fractional non-Markov Poisson stochastic process, based on fractional generalization of the Kolmogorov-Feller equation. In, \cite{Las}, the authors compute the probability of $n$ arrivals by time $t$, exhibits the long-memory effect, and also compute the waiting time distribution as a non-exponential density related to the Mittag-Leffler function.

At the same time, Wang et al \cite{Wan1} defined the fractional Poisson process (fPp) as  a class of non-Gaussian processes with stationary increments, by using the integral with respect to the same kernel as the fractional Brownian motion, but replacing Brownian motion by a compensated Poisson process with intensity $\lambda >0$. 

This construction yields that almost surely all paths of the process are continuous and of H\"older index strictly less than $H-1/2$, for $ H> 1/2$, and share many properties in common with the classical Gaussian fractional Brownian motion (fBm): it exhibits  long-range dependence, it has the same covariance structure, has wide-sense stationary increments, among others.

%Pagina 1 
%%%%%%%%%%
%%%%%%%%%%%

From the non-Gaussianity of the fPp, we find evidence of the heavy tails in the underlying process $N_t$, while the fBm does not capture this property. There are other few options for defining processes with heavier tails, which also exhibits long memory and a weak version of the self-similarity property. A special class of non Gaussian long memory self-similar processes, that are limits in the so called Non Central Limit Theorem, is the Rosenblatt process, studied for example in \cite{cro}, \cite{tud} and \cite{tv}, and the references therein. In the case of long memory, with heavier tails process we can mention  the  t-Student process, which has the property that at each instant $t$, has a t-Student distribution with $t$ degrees of freedom. For more references  see \cite{PILAR} . 

Thus, the use of Poisson noise, which we advocate in this article, is an alternative way of capturing heavy tails, and it provides for the property of long-range dependence in a more natural way than for the Rosenblatt process, since it uses a kernel integral definition based on a process with independent increments. In other words, the long memory constructed in fPp goes deeper than in the Rosenblatt process, since in that case, long memory is just a property of the process's covariance structure. The Poisson noise also helps leave the realm of constructions based on normal or second-chaos fluctuations. 

In this work we consider the problem of estimate the parameter $\theta$ in the following linear stochastic differential equation driven by  fractional Poisson process
\begin{equation}\label{intro1}
dX_{t} = \theta X_{t} \, dt + dN^{H}_{t} , \ \ t \in [0,T],
\end{equation}

We recall here, the two well known approaches to estimate the parameter in a linear fractional Gaussian Ornstein - Uhlenbeck model. For a more general picture of the topic see for example \cite{kes1}. 

\begin{itemize}
\item Maximum Likelihood Estimation (MLE):  In general, the use maximum likelihood techniques relies on knowing the density function explicitly in order to be able to perform the necessary analysis to maximize the score function. For fBM models, this technique is an application of the Girsanov theorem, and depends on the properties  of deterministic fractional operators  determined by $H$, the Hurst parameter, (see \cite{klep} , \cite{rao} and  \cite{viens1}). Also, there are MLE methods based on numerical approximations for the fractional linear models, in this case we can mention the work of \cite{ber2} , \cite{ber1} and \cite{Rif}, where  the authors approximates the model by the Euler scheme approximation, and the fBm by a disturbed random walk \cite{Sot}.

\item Least Square Estimation (LSE): This method has been studied in \cite{hu1}, where the authors used malliavin calculus to prove the consistency of the estimator. On the other hand, in the ergodic case, the study of consistency for LSE has been treated for many authors \cite{azm1}, \cite{cenac} , \cite{onsy}, \cite{azm2}, \cite{hu1}, \cite{hu2}. In the non-ergodic case we can mention the works: \cite{kes3}, \cite{mendy}, \cite{kes1}, \cite{kes2}, where the authors use the H\"older regularity of the fractional Gaussian process and malliavin calculus to prove the consistency of the estimator and their asymptotic distribution.
\end{itemize}

The problem of estimating stochastic parameters in a model driven by fBm starting in the 2000s, but the case of non Gaussian process with long memory is relatively new, since in this case, is not clear how to get an explicit likelihood function. We can mention here the  work given in \cite{ber1}, where the authors deals the Rossenblatt process, which one is a non Gaussian long memory process.

%%%PAGINA 2
%%%%%%%%%%%%
%%%%%%%%%%%%

From here, is where it came the motivation of our work, we want to take a step in this direction and work in the framework of estimation in a non-Gaussian long memory model. That is why, in this article, we will consider a simple model given by the solution $X$ of a linear stochastic differential equation given in (\ref{intro1}),  where $ \left( N^{H}_{t} \right)_{t \in [0,T]}$ is a fractional Poisson process with Hurst index $H>1/2$, and $\theta \in \mathbb{R}$ is an unknown parameter. Our plan is to estimate $\theta$ using discrete time observations of $X$, driven by an approximate model of equation (\ref{intro1}). Thus, we will work under the context of \cite{ber2}, \cite{ber1} and \cite{Rif}. We will mention some of the main difficulties in this article.

\begin{itemize}
\item The finite dimensional distributions of the process fPp are not known, and of course are not Gaussian.
\item The weak convergence theorems (Fourth moment theorem) related to Poisson integrals greatly differs from the Gaussian context. \cite{PD}. 
\item The H\"older regularity of the process fPp is less than the regularity of the fBm. 
\item It is a big task manipulate Wiener-It\^o integrals in the  Poisson framework, than the Gaussian one. 
\end{itemize}

Since the  process $X$ given by the unique solution of equation (\ref{intro1}) has an autoregressive structure, we can prove the conditional mean square consistency for the LSE, and in the case of the  MLE  estimator, we are able to construct the estimator by means of a transformation of the noise which includes a Bernoulli random variables.  This is an extra task in this issue. In order to study the rate of convergence and the  asymptotic distribution,  we use Monte Carlo simulation, besides some techniques of approximation for integrals. This allows us, to show for different values of $H$, the shape of the limits distribution. We mention here that the main problem to obtain the asymptotic distribution is related to the autoregressive structure of the  model.

 Furthermore, we define an extra parameter $\alpha$ that control the number of samples ensuring  the convergence results. In the case of the model related to fractional Poisson process we need that $\alpha >3/2$ (see Theorem (\ref{theorem1}) and Theorem (\ref{theorem2})), on the other hand when there is a relation between $\alpha$ and $\lambda$. In particular, if we take $\lambda = m \ln(2)$, then we recover the model related to fractional Brownian motion and the necessary condition in this case is $\alpha >1 $. The difference between the fPp model and fBm model is expected and is due to the structure of the different random walks related to the  approximation of both process. In view of the aforementioned, it is that working in the context of articles \cite{ber2}, \cite{ber1} and \cite{Rif} give us a chance to solve the problem of the consistency of the estimators.

Finally we organized the rest of our paper as follows. In section 2, we give some preliminaries related to fractional Poisson process and the fractional Poisson random walk. In Section 3, the model is presented. Section 4 is devoted to the behaviour analysis of the least square estimator. Section 5  considers the maximum likelihood estimator. Finally a simulation study and a discussion is reported in Section 6.

%PAGINA 3
%%%%%%%%%%%%%
%%%%%%%%%%

\section{Preliminaries and main results}
This section introduce the basic notions that we will need throughout the paper. First, we introduce the fractional Poisson process and some elements related to the approximation of the fPp, afterwards the model is presented. Finally,  we  present our main results.  

\subsection{Fractional Poisson process.}
Let $N_{t}^{H}$ a fractional Poisson process with  $\left( H \ \in \ \left( \frac{1}{2}, 1  \right) \right)$. The stochastic process $N_{t}^{H}$ with $t \in [0,T]$ defined by 
\begin{equation}
N_{t}^{H}= \frac{1}{\Gamma \left( H-\frac{1}{2} \right)} \int_{0}^{t}u^{\frac{1}{2}-H}{\left(  \int_{u}^{t} \tau^{H-\frac{1}{2}} \left( \tau - u \right)^{H-\frac{3}{2}} d\tau \right)} dq(u),
\end{equation}
where $q(u)= \frac{N(u)}{\sqrt{\lambda}}-\sqrt{\lambda}u$  and $N(u)$ is a homogeneous Poisson process with intensity  $\lambda \geq 0$. Following \cite{Wan1}  we have: 
\begin{itemize}
\item[(i)] The covariance  function of $N_{t}^{H}$ is given by 
\begin{equation}
 \hspace{-1.1cm} \mathbb{E}(N_{t}^{H}N_{s}^{H})= \frac{V_H^2}{2}\left( t ^{2H} +    s ^{2H} - \vert t-s \vert^{2H} \right) \hspace{0.1cm} \mbox{where}  \hspace{0.1cm} V_{H}^{2}=-\frac{\Gamma(2-2H)cos(\pi H)}{(2H-1) \pi H}
\end{equation}
\item[(ii)] $N_{t}^{H}$ is wide sense self-similar, i.e 
 $\mathbb{E}(N_{t}^{H})=0$, $\mathbb{E}(N_{at}^{H}N_{as}^{H})= a^{2H}\mathbb{E}(N_{t}^{H}N_{s}^{H})$,
and $\mathbb{E}(N_{t}^{H}-N_{s}^{H})^{2}= V_{H}^{2} \vert t-s \vert^{2H}$.
\item[(iii)] The fractional Poisson process $N_{t}^{H}$ has wide sense stationary increments. 
\item[(iv)] The fractional Poisson process $N_{t}^{H}$ exhibits the long range dependence.
\item[(v)] For $H>\frac{1}{2}$ the fractional Poisson process $N_{t}^{H}$ on $[0,T]$ has a.s. H\"{o}lder continuous paths of any order strictly less than $H -\frac{1}{2}$ (see \cite{ar} for details). 
\end{itemize}
It is important to mention that $N^{H}_{t}$ cannot be self-similar in a strict sense. (See theorem 3.12 in  \cite{mish}).
More properties of this process can be found in \cite{Wan1} ,\cite{Wan2}, \cite{mish} and \cite{mish2}. From now we have, 
\begin{equation}\label{Kernel}
K_H(t,s) = {1 \over \Gamma(H-1/2) } s^{\frac{1}{2}-H}{\left(  \int_{s}^{t} \tau^{H-\frac{1}{2}} \left( \tau - s \right)^{H-\frac{3}{2}} d\tau \right)},
\end{equation}
so we have the next representation for $N_t^H$ in the form 
\begin{equation}\label{poisson2}
N_t^H= \int_{0}^{t} K_H(t,s) \, dq(s).
\end{equation}
Let be $\widetilde{N}_{t}= N_{t}-\lambda t$, the compensated Poisson process with intensity $\lambda$. We define $\{{\widetilde N}_k^n: k=0, \ldots n \}$, the non symmetric random walk approximating the compensated Poisson process as:
\begin{equation}\label{eta}
{\widetilde N}_0^n=0; \quad {\widetilde N}_k^n = \sum_{i=1}^k \eta_i^n \quad (k=1, \ldots , n).
\end{equation}
where $\eta_1^n , \ldots , \eta_n^n$ are independent and identically distributed random variables with probabilities, for each k, given by: 
\begin{equation}\label{kappa}
\mathbb{P}(\eta_k^n=\kappa_n-1)= 1- \mathbb{P}(\eta_k^n=\kappa_n)=\kappa_n, \mbox{  where }  \ \kappa_n=e^{\frac{-\lambda}{n}}.
\end{equation}

The following lemma gives the convergence of the Poisson random walk to the centered Poisson process and the proof is given in \cite{Lej}. 

%PAGINA 4
%%%%%%%%%%%%
%%%%%%%%%%%%

\begin{lem}\label{RWP}
Let  $\displaystyle \widetilde{N}_t^n$  defined as in (\ref{eta}) and (\ref{kappa}). Then $\widetilde{N}_t^n$  is a ${\cal F}^n$- martingale and exists a family $(\phi^n)_{n  \in  \mathbb{N}}$ of one-to-one random time changes from $[0 ,1]$ to $[0, 1]$ such that $\displaystyle \sup_{t \in [0,1]}{\vert  \phi^n(t)-t  \vert} \longrightarrow   0$ almost surely as $n \to \infty$, and $\displaystyle \sup_{t \in [0,1]}{\vert  \widetilde{N}_{t} -\widetilde{N}^n_{\phi^n(t)}\vert} \longrightarrow   0$ in probability. In other words  $\widetilde{N}^{n}$ converges in probability to $\widetilde{N}$ in the $J_{1}$ de Skorokhod topology. 
 \end{lem}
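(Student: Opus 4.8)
The plan is to establish the two assertions separately. The martingale property is immediate: with $\mathcal F^{n}_{k}:=\sigma(\eta_{1}^{n},\dots,\eta_{k}^{n})$, formula (\ref{kappa}) gives $\mathbb E[\eta_{k}^{n}]=(\kappa_{n}-1)\kappa_{n}+\kappa_{n}(1-\kappa_{n})=0$, and the $\eta_{i}^{n}$ are i.i.d.\ and bounded; hence $\mathbb E[\widetilde N^{n}_{k+1}\mid\mathcal F^{n}_{k}]=\widetilde N^{n}_{k}+\mathbb E[\eta_{k+1}^{n}]=\widetilde N^{n}_{k}$, so $(\widetilde N^{n}_{k})_{k}$ is an $\mathcal F^{n}$--martingale, and the continuous-time statement is its piecewise-constant interpolation $t\mapsto\widetilde N^{n}_{\lfloor nt\rfloor}$ with the induced filtration. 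The $J_{1}$--convergence to $\widetilde N$ is the real content, and I would obtain it through an explicit coupling of all the walks with a single Poisson process.

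First I would recast the walk. Put $B_{i}^{n}:=\mathbf{1}\{\eta_{i}^{n}=\kappa_{n}\}$, so that $\mathbb P(B_{i}^{n}=1)=1-\kappa_{n}=1-e^{-\lambda/n}$ and $\eta_{i}^{n}=B_{i}^{n}-(1-\kappa_{n})$; with $S_{k}^{n}:=\sum_{i=1}^{k}B_{i}^{n}$ this gives
\[
\widetilde N^{n}_{k}=S_{k}^{n}-k\bigl(1-e^{-\lambda/n}\bigr) ,
\]
so $\widetilde N^{n}$ is a compensated binomial counting process of mesh $1/n$, to be compared with the compensated Poisson process $\widetilde N_{t}=N_{t}-\lambda t$. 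Realise all the $\widetilde N^{n}$ together with $\widetilde N$ on one probability space carrying a single Poisson process $N$ of intensity $\lambda$ by setting $B_{k}^{n}:=\mathbf{1}\{N_{k/n}-N_{(k-1)/n}\ge1\}$; independence and stationarity of the Poisson increments give the required joint law ($B_{1}^{n},\dots,B_{n}^{n}$ i.i.d.\ $\mathrm{Bernoulli}(1-\kappa_{n})$). Let $A_{n}$ be the event that some mesh interval $\bigl((k-1)/n,k/n\bigr]$ carries at least two jumps of $N$, or that $N$ jumps in the last mesh interval $\bigl((n-1)/n,1\bigr]$; a union bound over the a.s.\ finitely many jumps in $[0,1]$ gives $\mathbb P(A_{n})=O(1/n)\to0$, and on $A_{n}^{c}$ every increment of $N$ over a mesh interval is $0$ or $1$, so $S_{k}^{n}=N_{k/n}$ for every $k$.

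Next, enumerate the jumps of $N$ in $[0,1]$ as $\sigma_{1}<\dots<\sigma_{J}$ (with $J<\infty$ a.s.) and let $\phi^{n}\colon[0,1]\to[0,1]$ be the increasing, piecewise-linear bijection fixing $0$ and $1$ with $\phi^{n}(\sigma_{j})=\lceil n\sigma_{j}\rceil/n$ for each $j$ (on $A_{n}^{c}$ these images are distinct and lie in $(0,1)$). Each jump time is displaced by less than $1/n$, so by monotonicity $\sup_{t\in[0,1]}|\phi^{n}(t)-t|<1/n\to0$ almost surely. On $A_{n}^{c}$ one checks, using that each $\sigma_{j}$ occupies a mesh interval by itself, that $N_{\lfloor n\phi^{n}(t)\rfloor/n}=N_{t}$ for every $t\in[0,1]$; since also $S_{k}^{n}=N_{k/n}$ on $A_{n}^{c}$, the compensators are the only discrepancy and
\[
\widetilde N_{t}-\widetilde N^{n}_{\phi^{n}(t)}=\lfloor n\phi^{n}(t)\rfloor\bigl(1-e^{-\lambda/n}\bigr)-\lambda t ,
\]
which tends to $0$ uniformly in $t$ because $\lfloor n\phi^{n}(t)\rfloor/n\to t$ uniformly and $n(1-e^{-\lambda/n})\to\lambda$. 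Hence $\sup_{t\in[0,1]}|\widetilde N_{t}-\widetilde N^{n}_{\phi^{n}(t)}|=O(1/n)$ on $A_{n}^{c}$, and together with $\mathbb P(A_{n})\to0$ this yields $\sup_{t}|\widetilde N_{t}-\widetilde N^{n}_{\phi^{n}(t)}|\to0$ in probability, i.e.\ $\widetilde N^{n}\to\widetilde N$ in the $J_{1}$ Skorokhod topology.

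I expect the main obstacle to be the bookkeeping around the event $A_{n}$: one must verify both the rate $\mathbb P(A_{n})=O(1/n)$ and, on $A_{n}^{c}$, that the coupling really pairs each jump of $N$ with a single unit jump of $S^{n}$ at the right endpoint of its mesh interval, so that after composing with $\phi^{n}$ the large jumps of $\widetilde N^{n}$ land exactly at $\sigma_{1},\dots,\sigma_{J}$ and the identity $N_{\lfloor n\phi^{n}(t)\rfloor/n}=N_{t}$ is valid for all $t$; the accompanying point — the uniform comparison of the discrete compensator $\lfloor n\phi^{n}(t)\rfloor(1-e^{-\lambda/n})$ with the continuous one $\lambda t$ — is exactly where the specific choice $\kappa_{n}=e^{-\lambda/n}$ is used. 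A less hands-on alternative would be to invoke a functional limit theorem for compensated thinned Bernoulli processes or a general semimartingale convergence criterion, but the coupling is the most direct route, and is the one underlying the argument of \cite{Lej}.
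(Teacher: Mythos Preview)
The paper does not give its own proof of this lemma; it simply states that ``the proof is given in \cite{Lej}'' and moves on. Your proposal therefore goes well beyond what the paper supplies: you build an explicit Skorokhod-style coupling of all the walks with a single Poisson process, construct the time changes $\phi^{n}$ by hand, and verify both convergences directly. The argument is correct and self-contained, and is indeed the natural construction underlying the cited reference.

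One small bookkeeping point: your $\phi^{n}$ is only well-defined as written on the event $A_{n}^{c}$ (where the images $\lceil n\sigma_{j}\rceil/n$ are distinct and lie in $(0,1)$). For the almost-sure statement $\sup_{t}|\phi^{n}(t)-t|\to0$ you should either (i) note that $A_{n}^{c}$ holds eventually almost surely --- the finitely many jump times have a positive minimum spacing and $\sigma_{J}<1$ a.s., so for $n$ large each mesh interval contains at most one jump and none fall in the last interval --- or (ii) simply set $\phi^{n}=\mathrm{id}$ on $A_{n}$. Either way the a.s.\ convergence follows, and the in-probability statement is unaffected since $\mathbb{P}(A_{n})\to0$. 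Also, your phrase ``a union bound over the a.s.\ finitely many jumps'' is slightly imprecise; the clean bound is over the $n$ mesh intervals, giving $\mathbb{P}(A_{n})\le n\cdot\mathbb{P}\bigl(\mathrm{Poisson}(\lambda/n)\ge2\bigr)+\mathbb{P}\bigl(\mathrm{Poisson}(\lambda/n)\ge1\bigr)=O(1/n)$. These are cosmetic fixes; the substance of your argument is sound.
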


Now, we are ready to define the random walk approximation for Poisson fractional Process. 
Let us define for all $ n \in \mathbb{N}$  the approximation for the Kernel $K$ given in \cite{Sot} by, 
\begin{equation}
K^{n}(t,s):= n \int_{s-\frac{1}{n}}^{s}  {  K\left( \frac{ \lfloor nt \rfloor}{n} , s \right) } ds  \  \mbox{,} n \geq 1  \nonumber
\end{equation}
where $\lfloor x \rfloor$ denotes the greatest integer not exceeding $x$. Following the ideas given in \cite{Sot} and \cite{Torr} we define a discretization of the fractional Poisson Process as: 
\begin{equation}\label{podisc}
N_{t}^{n,H}= {1 \over \sqrt{\lambda}}\int_{0}^{t}{K^{n}(t,s)d\tilde{N}^{n}_{s}}=  {1 \over \sqrt{\lambda}}\sum_{i=1}^{\lfloor nt  \rfloor} {  n \int_{ {i-1 \over n}} ^{i \over n}{K\left( \frac{ \lfloor nt  \rfloor}{n} , s \right) } ds  \times  \eta_{i}^{(n)}}, 
\end{equation}
where  the family or random variables $\eta^{(n)}_i$ are defined as in (\ref{kappa}).

\begin{rem}
In Araya et.al  \cite{ar}, the authors proved that $N^{n,H}$ converges weakly in the Skorohod topology to $N^{H}$ as $n \rightarrow \infty$.
\end{rem} 

%%%%%%%
\subsection{Parameter estimation in the Ornstein-Uhlenbeck process}
Let us consider the non-Gaussian fractional Ornstein-Uhlenbeck process 
\begin{equation}\label{fou}
X_{t} = \theta \int_{0}^{t} X_{s} ds + N_{t}^{H} , \ \ \ \  t \in [0,T]
\end{equation}
where $N^{H} = \left\lbrace N_{t}^{H}, t \in [0,T] \right\rbrace$ is a fractional Poisson process with
Hurst parameter $H \in (1/2, 1)$, and unknown drift parameter $\theta \in \mathbb{R}$.
Assume that, we have observations $0 = X_{t_{0}} , X_{t_{1}} , \dots, X_{t_{m}}$ and this observations are taken at evenly spaced times $0 = t_{0} < t_{1} < \cdots < t_{m} = 1$. Such observations satisfy the difference equation:
\begin{equation}
X_{j+1} = X_{j} + \theta  X_{j}\Delta t_{j+1} + \Delta N_{j+1}^{H} , \ \ \ 0 \leq j \leq m-1.
\end{equation}
where  $X_{t_{j}} := X_{j} $,  $\Delta t_{j+1} = t_{j+1} -t_{j} = {1 \over m}$,  $\Delta N_{j+1}^{H} = N_{j+1}^{H}  - N_{j}^{H}$ and we also have that $X_{j}$, $N_{j}^{H}$ and  $t_{j}$ depends on $m$. From now, we replace  $N_{t}^{H}$ by $N_{t}^{m,H}$, where $N_{t}^{m,H}$ is given by (\ref{podisc}) and then, we  study the  parameter estimation for the discrete model given by: 
\begin{equation}\label{modeloNN}
X_{j+1} = \left(1 + {\theta \over m} \right) X_{j} + \Delta N_{j+1}^{m,H} , \ \ \ 0 \leq j \leq m-1.
\end{equation}  

Note that for every $j \geq 0$, the increment of the fractional random walk $N^{m,H}$ can be expressed as (see \cite{ber2}):
\begin{equation}\label{incrementoNN}
\Delta N_{j+1}^{m,H} =  \sum_{i=1}^{j} f_{ij} \eta_{i}^{m} + F_{j}\eta_{j+1}^{m},  
\end{equation}
% PAGINA 5
%%%%%%%%%

with
\begin{equation}\label{F}
F_{j} :=  m \left( \int_{j \over m }^{j+1 \over m}  K\left( {j+1 \over m} , s \right)  ds \right)
\end{equation}
and 
\begin{equation}\label{f}
f_{ij} :=  m \left( \int_{i-1 \over m }^{i \over m} \left[ K\left( {j+1 \over m} , s \right) - K \left( {j \over m} , s\right) \right] ds \right).
\end{equation}
Let us assume that we have our disposal $m^{\alpha}$ observations, where $\alpha > 1$ will be chosen wisely in order to obtain the convergence result of the estimators. With this in hand and taking into account equations (\ref{modeloNN}) to (\ref{f}), we write:
 
\begin{equation}\label{modeloF}
X_{j+1} = \left(1 + {\theta \over m} \right) X_{j} +  \sum_{i=1}^{j} f_{ij} \eta_{i}^{m^{\alpha}} + F_{j}\eta_{j+1}^{m^{\alpha}} , \ \ \ 0 \leq j \leq m^{\alpha}-1.
\end{equation}

For every $j \geq 1$, the observations $X_{j}$ are non-Gaussian random variables. They are, of course, correlated, and the dependence structure between the observations is rather complicated, involving the elements $f_{i,j}$ and $F_{j}$ previously defined. Moreover it is clear that random variables $\eta_{k}$ depend on the size of the sample. As in \cite{Rif} we will write our model in  (\ref{modeloF}) as:  

\begin{equation}\label{modeloF2}
X_{j+1} = \left(1 + {\theta \over m} \right) X_{j} + T_{j} - {\theta \over m}S_{j-1}  + F_{j} \eta_{j+1}^{m^{\alpha}},  
\end{equation}
where
\begin{equation}\label{T}
T_{j} = T_{j} (x_{1},..., x_{j} ) = \sum_{i=1}^{j} f_{ij} \sum_{k=1}^{i} b_{i,k}^{-1} \cdot x_{k},
\end{equation}
\begin{equation}\label{S}
S_{j-1} = S_{j} (x_{1},..., x_{j-1} ) = \sum_{i=1}^{j} f_{ij} \sum_{k=1}^{i} b_{i,k}^{-1} \cdot (x_{1} + ...+ x_{k-1}).
\end{equation}
 and  $b_{i,k}^{-1} =  \left[ m \int_{i-1 \over m }^{i \over m} K\left( {k \over m } , s \right) ds \right]^{-1}$.

\subsubsection{Least Square Estimator (LSE)}
In this part of the article, we define the LSE for the parameter $\theta$ in the model (\ref{modeloF}). This is achieved using the equivalent form (\ref{modeloF2}), and by formally minimizing: 
\begin{equation*}
\left[  \sum_{j=1}^{m^{\alpha}} {X_{j+1} - \left(1 + {\theta \over m} \right) X_{j} + T_{j} - {\theta \over m}S_{j-1} \over  F_{j} } \right]^{2}
\end{equation*}
with respect to $\theta$, we use the equivalent form (\ref{modeloF2})  since this allows us to simplify the dependence of the model with respect to the error terms $\eta_{i}^{m^{\alpha}}$.  With this in hand, we have that the LSE is given by:
\begin{equation}\label{ls}
\widehat{ \theta }_{LS}^m = m { \sum_{j=1}^{m^{\alpha}} F^{-2}_{j} \cdot \left( \Delta X_{j+1} -  T_{j} \right) \left( X_{j}  - S_{j-1}  \right)  \over \sum_{j=1}^{m^{\alpha}} F^{-2}_{j} \cdot  \left( X_{j}  - S_{j-1} \right)^{2}  }.
\end{equation}

\begin{rem}\label{sigma1}
Is easy to see that the  $\sigma$ - algebra $ \mathscr{F}^{X}_{j}$ generated by $X_{1},..., X_{j}$ coincides with the $\sigma$ -algebra $\mathscr{F}^{\eta}_{j}$ generated by $\eta_{1}^{m^{\alpha}},..., \eta_{j}^{m^{\alpha}}$. 
\end{rem}
%%%%%%%%%%%

By (\ref{modeloF2}) and (\ref{ls}) we get that:
\begin{equation}\label{difls}
\widehat{ \theta }_{Ls}^m - \theta = m { \sum_{j=1}^{m^{\alpha}} F^{-1}_{j} \cdot  \left( X_{j}  - S_{j-1}  \right) \eta_{j+1}^{m^{\alpha}}  \over \sum_{j=1}^{m^{\alpha}} F^{-2}_{j} \cdot \left( X_{j}  - S_{j-1} \right)^{2}  }.
\end{equation}

\subsubsection{Maximum Likelihood Estimator (MLE). }
In this section we give the  MLE  estimator, by means of a transformation of the noise which includes a Bernoulli random variables. 
By using (\ref{T}) and (\ref{S}) we can rewrite  the equation (\ref{modeloF}) as:
\begin{equation}\label{modeloF22}
X_{j+1} = \left(1 + {\theta \over m} \right) X_{j} + T_{j}  - {\theta \over m}S_{j-1} + F_{j} \eta_{j+1}^{m^{\alpha}},  
\end{equation}
here $\eta_{j+1}^{m^{\alpha}}$ is a sequence of  a especial case of Bernoulli random variables, see equation (\ref{kappa}). Now, defining 
\begin{equation}\label{wa1}
B^{m, \alpha}_{j+1} = \eta_{j+1}^{m^{\alpha}} - ( e^{-\lambda / m^{\alpha}} -1 ),
\end{equation}
we obtain that, $B^{m, \alpha}_{\cdot}$ is a sequence of $0-1$ Bernoulli i.i.d  random variables.  Let us recall that from (\ref{wa1}) and \textit{Remark} (\ref{sigma1}), the $\sigma$-algebra $\mathscr{F}^{X}_{j}$ generated by $X_{1}, \dots, X_{j}$ coincides with the $\sigma$-algebra $\mathscr{F}^{{B}_j^{m , \alpha}}$ generated by $B^{m,\alpha}_1,, \ldots   B^{m,\alpha}_j$. 
Since now, we are going to omit superscript  $m , \alpha$ in $  B^{m,\alpha}$. 

From (\ref{wa1}) we can rewrite (\ref{modeloF22}) as
\begin{eqnarray}
X_{j+1} &=& X_{j} + \theta \left(  {X_{j} \over m}    - {S_{j-1} \over m}  \right) + T_{j}  + F_{j} \cdot \left(B_{j+1}+ ( e^{-\lambda / m^{\alpha}} -1 )   \right) \nonumber  \\ 
%%%%%%%%%%%%%%%%%%%%%%%%%%%
&=& R_{j}(\theta)  + F_{j} \cdot \left(B_{j+1}+ ( e^{-\lambda / m^{\alpha}} -1 )   \right).  \label{xmodel}
%%%%%%%%%%%%%%%%%%%%%%%%%%%
\end{eqnarray}
Here 
\begin{equation}\label{R}
R_{j}(\theta) = X_{j} + \theta \left(  {X_{j} \over m}    - {S_{j-1} \over m}  \right) + T_{j}.
\end{equation}
 Thus, 
\begin{equation}
B_{j+1} = {X_{j+1} - R_{j}(\theta) - F_{j} \cdot (e^{-\lambda / m^{\alpha}} -1)  \over F_{j} }.
\end{equation} 
Now, we are in position to give the expression of the MLE for $\theta$ in (\ref{modeloF}).
\begin{prop}
The maximum likelihood estimator $\hat{\theta}_{ML}^m$ for the parameter $ \theta $ in (\ref{modeloF}) is
given by:
\begin{small}
\begin{equation}\label{ml}
\hat{\theta}_{ML}^m = m {  \sum_{j=1}^{m^{\alpha}} F^{-2}_{j}  \cdot  \left( \Delta X_{j+1} - T_{j} \right)   \left( X_{j} - S_{j-1} \right)  + \left( e^{-\lambda /m^{\alpha}}  -1  \right) \sum_{j=1}^{m^{\alpha}} F^{-1}_{j}  \cdot \left( X_{j} - S_{j-1}  \right)   \over \sum_{j=1}^{m^{\alpha}} F^{-2}_{j} \cdot  \left( X_{j} - S_{j-1} \right)^{2}    }
\end{equation}
\end{small}
with $T_{j}$ , $S_{j-1} $ given by (\ref{T}), (\ref{S}), respectively.
\end{prop}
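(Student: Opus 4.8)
The plan is to derive the MLE directly from the likelihood of the observations, exploiting the autoregressive representation \eqref{xmodel}. First I would note that, by \textit{Remark} \eqref{sigma1} and the discussion following \eqref{wa1}, the sigma-algebra $\mathscr{F}^X_j$ coincides with $\mathscr{F}^{B}_j$, so that the conditional law of $X_{j+1}$ given $\mathscr{F}^X_j$ is determined by the law of the single Bernoulli variable $B_{j+1}$, which is independent of $\mathscr{F}^X_j$. From \eqref{xmodel}, conditionally on $\mathscr{F}^X_j$ the variable $X_{j+1}$ takes exactly two values: $R_j(\theta) + F_j\,( \kappa_{m^\alpha}-1)$ with probability $\kappa_{m^\alpha} := e^{-\lambda/m^\alpha}$, and $R_j(\theta) + F_j\,\kappa_{m^\alpha}$ with probability $1-\kappa_{m^\alpha}$, where $R_j(\theta)$ is given by \eqref{R}. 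Equivalently, writing $B_{j+1} = \bigl(X_{j+1}-R_j(\theta)-F_j(\kappa_{m^\alpha}-1)\bigr)/F_j$, the value $B_{j+1}=0$ has probability $\kappa_{m^\alpha}$ and $B_{j+1}=1$ has probability $1-\kappa_{m^\alpha}$.

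Next I would write the joint likelihood as the product of the successive conditional likelihoods,
\begin{equation*}
L_m(\theta) = \prod_{j=1}^{m^\alpha} \kappa_{m^\alpha}^{\,1-B_{j+1}(\theta)}\,(1-\kappa_{m^\alpha})^{\,B_{j+1}(\theta)},
\end{equation*}
so that the log-likelihood is, up to an additive constant not depending on $\theta$, a linear function of $\sum_j B_{j+1}(\theta)$. Since $\kappa_{m^\alpha}$ does not depend on $\theta$, maximizing $\ell_m(\theta) = \log L_m(\theta)$ amounts to the simple stationarity condition $\frac{d}{d\theta}\sum_{j=1}^{m^\alpha} B_{j+1}(\theta) \cdot \log\frac{1-\kappa_{m^\alpha}}{\kappa_{m^\alpha}} = 0$, hence $\sum_{j=1}^{m^\alpha} \frac{d}{d\theta} B_{j+1}(\theta) = 0$. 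This last step needs a small justification: a priori the likelihood is piecewise constant in $\theta$ because $B_{j+1}(\theta)\in\{0,1\}$, so I would instead treat $B_{j+1}$ as a Bernoulli parameter $p_{j+1}(\theta)$, write the Gaussian-type (or exponential-family) score for the continuous reparametrization exactly as in \cite{Rif}, and set the score to zero. Concretely, from \eqref{xmodel} the natural score contribution is $F_j^{-2}\bigl(X_{j+1}-R_j(\theta)-F_j(\kappa_{m^\alpha}-1)\bigr)\cdot \frac{\partial R_j}{\partial\theta}$, with $\frac{\partial R_j}{\partial\theta} = \frac{1}{m}(X_j - S_{j-1})$ from \eqref{R}.

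Then I would solve the resulting linear equation in $\theta$. Summing the score contributions and using $\Delta X_{j+1} = X_{j+1}-X_j$ together with $R_j(\theta) = X_j + \frac{\theta}{m}(X_j-S_{j-1}) + T_j$, the equation
\begin{equation*}
\sum_{j=1}^{m^\alpha} F_j^{-2}\Bigl(\Delta X_{j+1} - \tfrac{\theta}{m}(X_j-S_{j-1}) - T_j - F_j(\kappa_{m^\alpha}-1)\Bigr)\cdot\tfrac{1}{m}(X_j-S_{j-1}) = 0
\end{equation*}
is linear in $\theta$; isolating $\theta$ gives precisely \eqref{ml}, with the term $(e^{-\lambda/m^\alpha}-1)\sum_j F_j^{-1}(X_j-S_{j-1})$ coming from the compensator $F_j(\kappa_{m^\alpha}-1)$ and the denominator $\sum_j F_j^{-2}(X_j-S_{j-1})^2$ coming from the coefficient of $\theta$. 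Finally I would check the second-order condition, i.e. that this critical point is a maximum: the coefficient of $\theta^2$ in $-\ell_m$ is proportional to $\sum_j F_j^{-2}(X_j-S_{j-1})^2 > 0$ almost surely (it vanishes only on a null event), so $\hat\theta^m_{ML}$ is the unique maximizer.

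The main obstacle is the conceptual one already flagged: because the driving noise is discrete Bernoulli rather than Gaussian, the likelihood as a function of $\theta$ is literally piecewise constant, and the "maximum likelihood estimator" must be understood through the exponential-family/score reparametrization used in \cite{Rif} rather than as a naive argmax; making this identification clean — and verifying that $F_j \neq 0$ so that the division in the definition of $B_{j+1}$ is licit — is where the care is needed. Once that is set up, the remaining algebra (differentiating $R_j$, summing, and solving the linear equation) is routine and reproduces \eqref{ml}.
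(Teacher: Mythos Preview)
Your computational route—writing the conditional likelihood through the Bernoulli variables $B_{j+1}$, obtaining a score equation, and solving the resulting linear equation in $\theta$—is essentially the paper's route and produces the correct formula. You also correctly isolate the real difficulty: with $B_{j+1}(\theta)$ appearing \emph{linearly} in the exponent, the log-likelihood is affine in $\theta$ (indeed piecewise constant once one remembers $B\in\{0,1\}$), so naive differentiation yields nothing.

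Where your argument has a gap is precisely in how this obstacle is overcome. You invoke a ``Gaussian-type / exponential-family score reparametrization as in \cite{Rif}'' and then write down a score $F_j^{-2}\bigl(X_{j+1}-R_j(\theta)-F_j(\kappa_{m^\alpha}-1)\bigr)\,\partial_\theta R_j$ without explaining why this object is the derivative of the Bernoulli log-likelihood. The paper resolves the issue by a concrete device you did not find: it replaces $B_{j+1}$ by $B_{j+1}^2$ in the exponents of \eqref{LX}. Since $B_{j+1}\in\{0,1\}$ at the true parameter, $B_{j+1}^2=B_{j+1}$ almost surely and the likelihood is unchanged on the data; but as a function of $\theta$ the exponent
\[
\left[\frac{X_{j+1}-R_j(\theta)-F_j(\kappa_{m^\alpha}-1)}{F_j}\right]^{2}
\]
is now \emph{quadratic}, so $\log L$ becomes smooth in $\theta$ and can be maximized by setting its derivative to zero. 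That derivative is exactly the ``score'' you wrote down, and solving it gives \eqref{ml}. In other words, the $B\mapsto B^2$ substitution is the missing one-line justification that turns your heuristic score into an honest first-order condition; once you insert it, your proof coincides with the paper's.
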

\begin{proof}
The estimator is based on the conditional law of $X_{j+1}$ given $X_1, \ldots , X_j$. We mention here that we  used $B^{2}_j$ instead $B_j$, since $R_{j}(\theta)$ defined in (\ref{R}) is a linear function of $\theta$, so if calculate the MLE with $B_j$ do not reach a favourable outcome. Also, $B^{2}_{1}, \ldots B^{2}_{n} $, for all $n \in \mathbb{N}$  are i.i.d. random variables with a Bernoulli distribution and parameter $p := p_{m} = e^{-\lambda /m^{\alpha}}$. 

Taking into account that $\mathscr{F}^{X}_{j} = \mathscr{F}^{B}_{j}$ and recalling that if $Z$ is a Bernoulli (0-1) random variable, then $Y=Z^{2}$ is also a Bernoulli (0-1) random variable, and $Y=Z$ a.s. In fact by (\ref{wa1}) and (\ref{xmodel}), we have:

\begin{equation*}
\mathbb{P}\left( B_{j+1} = 0 \right) = \mathbb{P}\left( B_{j+1}^{2} = 0 \right) = \mathbb{P}\left( X_{j+1} = R_{j}(\theta)  + F_{j} ( e^{-\lambda / m^{\alpha}} -1 ) \right) ,   
\end{equation*}
and, since $F_{j} \neq 0$, this implies 
\begin{equation*}
\mathbb{P}\left( B_{j+1}^{2} = 0 \right) = \mathbb{P}\left( X_{j+1} = {R_{j}(\theta)  + F_{j} ( e^{-\lambda / m^{\alpha}} -1 ) \over F_{j}} \right).   
\end{equation*}

Therefore, the likelihood is given by:

\begin{eqnarray}\label{LX}
& & L( X_1,\ldots ,X_{m^\alpha})  = \prod_{j=1}^{m^{\alpha}}  f(X_{j+1} / X_{1} \ldots X_{j}) =
\nonumber \\
 & = & \prod_{j=1}^{m^{\alpha}} p^{1- \left[ {X_{j+1} - R_{j}(\theta) - F_{j} \cdot (e^{-\lambda / m^{\alpha}} -1)  \over F_{j} }  \right]^{2}}(1-p)^{ \left[ {X_{j+1} - R_{j}(\theta) - F_{j} \cdot (e^{-\lambda / m^{\alpha}} -1)  \over F_{j} }  \right]^{2}}
\end{eqnarray}

By maximizing $ L( X_1,\ldots ,X_{m^\alpha})  $ with respect to the parameter $\theta$, we obtain the following expression of the maximum likelihood estimator
\begin{small}
\begin{equation*}
\hat{\theta}_{ML}^m = m {  \sum_{j=1}^{m^{\alpha}} F^{-2}_{j} \cdot  \left( \Delta X_{j+1} - T_{j}  \right)   \left( X_{j} - S_{j-1} \right)  + \left( e^{-\lambda /m^{\alpha}}  -1  \right) \sum_{j=1}^{m^{\alpha}} F^{-1}_{j}  \cdot  \left( X_{j} - S_{j-1}  \right)   \over \sum_{j=1}^{m^{\alpha}} F^{-2}_{j} \cdot \left( X_{j} - S_{j-1}  \right)^{2}    }
\end{equation*}
\end{small}
\end{proof}

By (\ref{modeloF}) and (\ref{ml}) 
\begin{equation}\label{difml}
\hat{\theta}_{ML} -\theta = m {  \sum_{j=1}^{m^{\alpha}} \left[ F^{-1}_{j} \cdot \left( X_{j} - S_{j-1}  \right) \right] \left[ \left( e^{-\lambda /m^{\alpha}}  -1  \right)  + \eta^{m^{\alpha}}_{j+1} \right]  \over \sum_{j=1}^{m^{\alpha}} F^{-2}_{j} \left( X_{j} - S_{j-1} \right)^{2}    }
\end{equation}

We are in position to state the main results of the paper.\\
\subsection{Main Result}

\begin{thm} \label{theorem1}
Let $ \hat{\theta}^m_{LS}$ be the Least square estimator for $\theta$ in the model (\ref{modeloF}). Then: 
\begin{equation*}
\mathbb{E} \left( \hat{\theta}_{LS}^m - \theta | X_{1} = x_{1} , ..., X_{j} = x_{j} \right) = 0. 
\end{equation*}
and 
\begin{equation*}
Var \left( \hat{\theta}_{LS}^m - \theta | X_{1} = x_{1} , ..., X_{j} = x_{j} \right) \rightarrow 0, 
\end{equation*}
as $m \rightarrow \infty$. 
\end{thm}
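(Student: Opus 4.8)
The plan is to work conditionally on the event $\{X_1 = x_1, \ldots, X_j = x_j\}$ throughout, and to exploit the key fact recorded in Remark~\ref{sigma1}, namely that $\mathscr{F}^X_j = \mathscr{F}^\eta_j$. From the explicit expression \eqref{difls}, the numerator of $\widehat{\theta}^m_{LS} - \theta$ is a sum $\sum_{j=1}^{m^\alpha} F_j^{-1} (X_j - S_{j-1}) \eta_{j+1}^{m^\alpha}$, and the crucial structural observation is that for each $j$ the coefficient $F_j^{-1}(X_j - S_{j-1})$ is $\mathscr{F}^X_j$-measurable (indeed $S_{j-1}$ and $X_j$ depend only on $x_1, \ldots, x_j$, and $F_j$ is deterministic), while $\eta_{j+1}^{m^\alpha}$ is independent of $\mathscr{F}^\eta_j = \mathscr{F}^X_j$. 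The first step is therefore to condition further on $\mathscr{F}^X_j$ inside each summand, use $\mathbb{E}[\eta_{j+1}^{m^\alpha}] = 0$ (which follows from \eqref{kappa}: $\mathbb{E}[\eta_k^n] = (\kappa_n - 1)\kappa_n + \kappa_n(1-\kappa_n) = 0$), and conclude by the tower property that the conditional expectation of the numerator vanishes. Since the denominator is strictly positive and $\mathscr{F}^X_{m^\alpha}$-measurable (hence constant on the conditioning event once we take $j = m^\alpha$, or more carefully: the conditioning is on all observed values so the whole ratio is being averaged against the randomness of the future $\eta$'s), this gives the unbiasedness claim $\mathbb{E}(\widehat{\theta}^m_{LS} - \theta \mid X_1 = x_1, \ldots) = 0$.

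For the variance, I would again start from \eqref{difls} and compute
\[
\operatorname{Var}\!\left(\widehat{\theta}^m_{LS} - \theta \,\middle|\, \mathscr{F}^X_{m^\alpha}\text{-data}\right)
= m^2 \,\frac{\operatorname{Var}\!\left(\sum_{j=1}^{m^\alpha} F_j^{-1}(X_j - S_{j-1})\,\eta_{j+1}^{m^\alpha}\right)}{\left(\sum_{j=1}^{m^\alpha} F_j^{-2}(X_j - S_{j-1})^2\right)^2}.
\]
The martingale-difference structure — the summands are orthogonal because $\eta_{j+1}^{m^\alpha}$ is centred and independent of the $\mathscr{F}^X_j$-measurable coefficient, and the cross terms vanish by the same conditioning argument — collapses the variance of the numerator to $\sum_{j=1}^{m^\alpha} F_j^{-2}(X_j - S_{j-1})^2 \operatorname{Var}(\eta_{j+1}^{m^\alpha})$. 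Now $\operatorname{Var}(\eta_k^{m^\alpha}) = \kappa_{m^\alpha}(1 - \kappa_{m^\alpha})$ with $\kappa_{m^\alpha} = e^{-\lambda/m^\alpha}$, so $\operatorname{Var}(\eta_k^{m^\alpha}) \sim \lambda/m^\alpha$ as $m \to \infty$. After this cancellation the conditional variance equals
\[
m^2 \cdot \kappa_{m^\alpha}(1-\kappa_{m^\alpha}) \cdot \left(\sum_{j=1}^{m^\alpha} F_j^{-2}(X_j - S_{j-1})^2\right)^{-1},
\]
so everything reduces to a lower bound on the deterministic-given-data quantity $D_m := \sum_{j=1}^{m^\alpha} F_j^{-2}(X_j - S_{j-1})^2$ and a precise order estimate on the prefactor $m^2 \kappa_{m^\alpha}(1-\kappa_{m^\alpha}) \asymp m^2 \cdot m^{-\alpha} = m^{2-\alpha}$.

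The main obstacle is exactly this last point: showing that $m^{2-\alpha}/D_m \to 0$. One needs $D_m$ to grow at least like a positive power of $m$ that beats $m^{2-\alpha}$, which is precisely where the hypothesis $\alpha > 3/2$ enters. I would estimate $F_j$ from \eqref{F} using the kernel \eqref{Kernel}: a standard computation (as in \cite{Sot}, \cite{ber2}) gives $F_j \approx c_H\, m^{1/2 - H}\,((j+1)/m)^{?}$-type behaviour near the diagonal, so $F_j^{-2}$ is of order $m^{2H-1}$ up to slowly varying factors. The quantities $X_j - S_{j-1}$ should be controlled from below using the autoregressive recursion \eqref{modeloF2} — on the conditioning event they are fixed numbers, so one really wants a statement that holds for "typical" data, or a bound uniform enough that at least one term in the sum is bounded away from zero; the cleanest route is to observe that $X_j - S_{j-1}$ reconstructs a noise-related increment and to keep enough of the $m^\alpha$ summands of guaranteed size. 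Counting $m^\alpha$ terms each of order $m^{2H-1}$ (times bounded data factors) yields $D_m \gtrsim m^{\alpha + 2H - 1}$ up to constants, whence $m^{2-\alpha}/D_m \lesssim m^{2-\alpha}/m^{\alpha + 2H -1} = m^{3 - 2\alpha - 2H}$, which tends to $0$ as soon as $2\alpha > 3 - 2H$; since $H > 1/2$ this is implied by $\alpha > 3/2$, matching the stated threshold. I expect the delicate part to be making the lower bound on $D_m$ rigorous given that we are conditioning on arbitrary observed values $x_1, \ldots, x_j$ rather than averaging, and in pinning down the exact exponent in the $F_j$ asymptotics; the rest is routine once the martingale-difference cancellation is in place.
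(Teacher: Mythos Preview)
Your reduction matches the paper's exactly: the conditional unbiasedness via $\mathbb{E}[\eta_{j+1}^{m^{\alpha}}]=0$ and the martingale-difference collapse of the numerator's second moment are just what the paper does, and both routes arrive at
\[
Var\bigl(\hat\theta^m_{LS}-\theta \,\big|\, X_1=x_1,\dots\bigr)=\frac{m^{2}\,\kappa_{m^{\alpha}}(1-\kappa_{m^{\alpha}})}{D_m},\qquad D_m:=\sum_{j=1}^{m^{\alpha}}F_j^{-2}(X_j-S_{j-1})^{2}.
\]

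The genuine gap is the lower bound on $D_m$. You obtain $D_m\gtrsim m^{\alpha+2H-1}$ by counting $m^{\alpha}$ terms, each contributing $F_j^{-2}\asymp m^{2H-1}$ times an $O(1)$ ``bounded data factor'' from $(X_j-S_{j-1})^2$. That last assumption is unjustified: conditionally on the data, $x_j-S_{j-1}(x_1,\dots,x_{j-1})$ is a fixed number with no a~priori positive lower bound, and its natural scale is in fact much smaller than $O(1)$. Your resulting exponent $3-2\alpha-2H$ would already give convergence for any $\alpha>3/2-H$ (hence for $\alpha>1$, since $H>1/2$), strictly better than the threshold $\alpha>3/2$ the paper actually needs --- a signal that the bound is too optimistic.

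The paper closes this gap with two preparatory lemmas. Lemma~\ref{cotaF} records the two-sided kernel estimate $c_H\,m^{1-2H}\le F_j^{2}\le\lambda^{-1}m^{-2H}$. The missing ingredient --- which makes your remark that ``$X_j-S_{j-1}$ reconstructs a noise-related increment'' precise --- is Lemma~\ref{lema3}: from the recursion one has
\[
X_j-S_{j-1}=g_j(X_1,\dots,X_{j-1})+F_{j-1}\,\eta_j^{m^{\alpha}},
\]
with $g_j$ linear in the past. Orthogonality of $\eta_j^{m^{\alpha}}$ to $\mathscr{F}^{X}_{j-1}$ then gives $\mathbb{E}(X_j-S_{j-1})^{2}\ge F_{j-1}^{2}\,\kappa_{m^{\alpha}}(1-\kappa_{m^{\alpha}})$, and hence
\[
\langle A\rangle_{m^{\alpha}}\ \ge\ \kappa_{m^{\alpha}}(1-\kappa_{m^{\alpha}})\sum_{j=1}^{m^{\alpha}} \frac{F_{j-1}^{2}}{F_j^{2}}\ \ge\ c_{H,\lambda}\,\kappa_{m^{\alpha}}(1-\kappa_{m^{\alpha}})\,m^{\alpha-1}.
\]
Feeding this into the variance yields the bound $c_{H,\lambda}\,m^{3-\alpha}\,\kappa_{m^{\alpha}}(1-\kappa_{m^{\alpha}})\asymp m^{3-2\alpha}$, and this is where the hypothesis $\alpha>3/2$ genuinely enters.
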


\begin{thm} \label{theorem2}
Let $ \widehat{\theta}_{ML}^m$ be the maximum likelihood estimator for $\theta$ in the model (\ref{modeloF}). Then 
\begin{equation*}
\mathbb{E} \left( \hat{\theta}_{ML}^m - \theta | X_{1} = x_{1} , ..., X_{j} = x_{j} \right) \rightarrow 0 .
\end{equation*}
and   
\begin{equation*}
Var \left( \hat{\theta}_{ML}^m - \theta | X_{1} = x_{1} , ..., X_{j} = x_{j} \right) \rightarrow 0, 
\end{equation*}
as $m \rightarrow \infty$.
\end{thm}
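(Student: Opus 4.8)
The plan is to mirror the proof of Theorem \ref{theorem1}, splitting the analysis into the conditional bias and the conditional variance, and carefully tracking the one extra term produced by the Bernoulli re-centering $e^{-\lambda/m^{\alpha}}-1$ that appears in (\ref{difml}) but not in (\ref{difls}). Write $a_{j}:=F_{j}^{-1}\left(X_{j}-S_{j-1}\right)$, which is $\mathscr{F}^{X}_{j}$-measurable by (\ref{S}) and Remark \ref{sigma1}, and $D_{m}:=\sum_{j=1}^{m^{\alpha}}a_{j}^{2}$, so that (\ref{difml}) reads
\[
\hat{\theta}_{ML}^{m}-\theta=\frac{m}{D_{m}}\sum_{j=1}^{m^{\alpha}}a_{j}\left[\left(e^{-\lambda/m^{\alpha}}-1\right)+\eta_{j+1}^{m^{\alpha}}\right].
\]
I would first collect the elementary facts about the driving noise to be used: from (\ref{kappa}), $\mathbb{E}\left(\eta_{j+1}^{m^{\alpha}}\right)=0$, $Var\left(\eta_{j+1}^{m^{\alpha}}\right)=e^{-\lambda/m^{\alpha}}\left(1-e^{-\lambda/m^{\alpha}}\right)$, and $\eta_{j+1}^{m^{\alpha}}$ is independent of $\mathscr{F}^{X}_{j}$; moreover $0\le 1-e^{-\lambda/m^{\alpha}}\le\lambda\,m^{-\alpha}$. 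The whole problem then reduces to obtaining a good lower bound for the random, path-dependent denominator $D_{m}$.

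For the conditional mean, conditioning on the observed path keeps the $a_{j}$ and $D_{m}$ fixed and leaves the innovations $\eta_{j+1}^{m^{\alpha}}$ centered, so the ``least-squares part'' $\sum_{j}a_{j}\eta_{j+1}^{m^{\alpha}}$ contributes nothing to the conditional expectation -- this is precisely the computation behind Theorem \ref{theorem1} -- leaving $\mathbb{E}\left(\hat{\theta}_{ML}^{m}-\theta\mid\cdots\right)=m\left(e^{-\lambda/m^{\alpha}}-1\right)D_{m}^{-1}\sum_{j}a_{j}$. Bounding $\left|e^{-\lambda/m^{\alpha}}-1\right|\le\lambda\,m^{-\alpha}$ and, by Cauchy--Schwarz, $\left|\sum_{j}a_{j}\right|\le\left(m^{\alpha}D_{m}\right)^{1/2}$, I obtain $\left|\mathbb{E}\left(\hat{\theta}_{ML}^{m}-\theta\mid\cdots\right)\right|\le\lambda\,m^{1-\alpha/2}D_{m}^{-1/2}$. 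For the conditional variance, the innovations entering the sum are conditionally independent, whence
\[
Var\left(\hat{\theta}_{ML}^{m}-\theta\mid\cdots\right)=\frac{m^{2}}{D_{m}^{2}}\sum_{j=1}^{m^{\alpha}}a_{j}^{2}\,Var\left(\eta_{j+1}^{m^{\alpha}}\right)=\frac{m^{2}e^{-\lambda/m^{\alpha}}\left(1-e^{-\lambda/m^{\alpha}}\right)}{D_{m}}\le\frac{\lambda\,m^{2-\alpha}}{D_{m}}.
\]

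Both estimates are then under control once I establish a polynomial lower bound for $D_{m}=\sum_{j=1}^{m^{\alpha}}F_{j}^{-2}\left(X_{j}-S_{j-1}\right)^{2}$ -- valid almost surely, which suffices for the statement -- growing in $m$ strictly faster than $m^{2-\alpha}$; since $\alpha>3/2$, it is enough to beat a rate below $m^{1/2}$. I would get this from two ingredients: the kernel-integral asymptotics for $F_{j}$ from \cite{Sot} and \cite{ber2}, which give that $F_{j}^{-2}$ is of order $m^{2H-1}$ and in particular diverges, together with a lower bound showing that each difference $X_{j}-S_{j-1}$ is of innovation type, with squared size of order $m^{-2H}$, so that the $m^{\alpha}$ summands accumulate to $D_{m}\asymp m^{\alpha-1}$. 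Feeding $D_{m}\gtrsim m^{\alpha-1}$ into the two displays yields $\left|\mathbb{E}\left(\hat{\theta}_{ML}^{m}-\theta\mid\cdots\right)\right|=O\left(m^{3/2-\alpha}\right)$ and $Var\left(\hat{\theta}_{ML}^{m}-\theta\mid\cdots\right)=O\left(m^{3-2\alpha}\right)$, both tending to $0$ precisely when $\alpha>3/2$. The hard part is exactly this lower bound on $D_{m}$: bounding from below a sum of strongly dependent, non-Gaussian squared terms, which is where the H\"older regularity of $N^{H}$ and the fine structure of the coefficients $F_{j}$, $f_{ij}$ enter, and where the restriction $\alpha>3/2$ -- the point of departure from the fractional Brownian case -- is forced.
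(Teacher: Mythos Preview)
Your proposal is correct and follows essentially the same route as the paper. You isolate the extra deterministic shift $e^{-\lambda/m^{\alpha}}-1$ in (\ref{difml}), kill the martingale part of the conditional mean exactly as in Theorem~\ref{theorem1}, apply Cauchy--Schwarz to $\sum_j a_j$, and then reduce everything to a polynomial lower bound $D_m\gtrsim m^{\alpha-1}$, which is precisely the content of Lemmas~\ref{cotaF} and~\ref{lema3}. Your final rates $O(m^{3/2-\alpha})$ for the bias and $O(m^{3-2\alpha})$ for the variance coincide with the paper's once one expands $e^{-\lambda/m^{\alpha}}-1\sim -\lambda m^{-\alpha}$.

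One small difference worth noting: for the conditional variance you observe directly that the deterministic shift $(e^{-\lambda/m^{\alpha}}-1)\sum_j a_j$ contributes nothing, so the variance equals $m^{2}\kappa_{m^{\alpha}}(1-\kappa_{m^{\alpha}})/D_m$ in one line. The paper instead bounds the conditional second moment via $(a+b)^2\le 2a^2+2b^2$, splitting into two pieces $I_1$, $I_2$, and then adds back the squared bias; this produces an extra $C_{H,\lambda}m^{3}(e^{-\lambda/m^{\alpha}}-1)^2$ term which is of the same order but unnecessary. Your computation is cleaner here. On the other hand, you only sketch the lower bound on $D_m$; in the paper this is done concretely through Lemma~\ref{cotaF} (the two-sided bounds $c_H m^{1-2H}\le F_j^2\le \lambda^{-1}m^{-2H}$) and Lemma~\ref{lema3} (the innovation decomposition $X_j-S_{j-1}=g_j(X_1,\dots,X_{j-1})+F_{j-1}\eta_j^{m^{\alpha}}$), so your heuristic ``each $(X_j-S_{j-1})^2$ is of order $m^{-2H}$'' should be replaced by a reference to those lemmas.
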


\begin{rem} Theorem (\ref{theorem1}) and (\ref{theorem2}) gives the conditional consistency in mean square of the LS and MLE estimator for $\theta$.
\end{rem}

\section{Proofs: Preliminary lemmas}
In order to prove ours main results we will need the following lemmas to control the denominator $F_j^2$:

\begin{lem}
Let $M \geq 1$, and  $A_{M}$ given by 
\begin{equation}
A_{M} = \sum_{j=1}^{M-1} F^{-1}_{j} \cdot  \left( x_{j}  - S_{j-1}  \right) \eta_{j+1} ,
\end{equation}
then $A_{M}$ is $\mathscr{F}^{X}_{j} = \mathscr{F}^{\eta}_{j} $-martingale. 
\end{lem}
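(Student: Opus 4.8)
The plan is to verify directly the three defining properties of a discrete-time martingale for the sequence $(A_M)_{M\ge 1}$ relative to the filtration $(\mathscr{F}^{X}_M)_{M\ge 1}=(\mathscr{F}^{\eta}_M)_{M\ge 1}$; throughout, the symbols $x_j$ in the statement are read as the random observations $X_j$, so that $S_{j-1}$ denotes the random variable $S_{j-1}(X_1,\dots,X_{j-1})$ defined in (\ref{S}) and $\eta_{j+1}=\eta_{j+1}^{m^\alpha}$ is the random-walk increment attached to model (\ref{modeloF}).

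First I would settle integrability and adaptedness together. Each $\eta_i^{m^\alpha}$ takes only the two values $e^{-\lambda/m^\alpha}-1$ and $e^{-\lambda/m^\alpha}$, so $|\eta_i^{m^\alpha}|\le 1$; iterating the recursion (\ref{modeloF}) exhibits every $X_j$ as a finite linear combination of $\eta_1^{m^\alpha},\dots,\eta_j^{m^\alpha}$, hence $X_j$, and likewise $S_{j-1}$ via (\ref{S}), are bounded, while $F_j\ne 0$ by (\ref{F}). Thus $A_M$ is a finite sum of bounded random variables and $\mathbb{E}|A_M|<\infty$. For adaptedness, the $j$-th summand $F_j^{-1}(X_j-S_{j-1})\eta_{j+1}^{m^\alpha}$ is a function of $\eta_1^{m^\alpha},\dots,\eta_{j+1}^{m^\alpha}$ only; the largest index appearing in $A_M$ is $j=M-1$, which introduces $\eta_M^{m^\alpha}$, so $A_M$ is $\mathscr{F}^{\eta}_M$-measurable and hence, by Remark (\ref{sigma1}), $\mathscr{F}^{X}_M$-measurable.

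The heart of the argument is the martingale step. Writing $A_{M+1}-A_M=F_M^{-1}(X_M-S_{M-1})\,\eta_{M+1}^{m^\alpha}$, the coefficient $F_M^{-1}(X_M-S_{M-1})$ is $\mathscr{F}^{X}_M=\mathscr{F}^{\eta}_M$-measurable, while $\eta_{M+1}^{m^\alpha}$ is independent of $\mathscr{F}^{\eta}_M$ since the $\eta_i^{m^\alpha}$ are i.i.d. The elementary but essential point is that these increments are centered: by (\ref{kappa}),
\begin{equation*}
\mathbb{E}\left(\eta_{M+1}^{m^\alpha}\right)=\kappa\,(\kappa-1)+(1-\kappa)\,\kappa=0,\qquad \kappa:=e^{-\lambda/m^\alpha}.
\end{equation*}
Pulling out the measurable factor and invoking independence then yields
\begin{equation*}
\mathbb{E}\left(A_{M+1}-A_M\mid \mathscr{F}^{X}_M\right)=F_M^{-1}(X_M-S_{M-1})\,\mathbb{E}\left(\eta_{M+1}^{m^\alpha}\right)=0,
\end{equation*}
so $\mathbb{E}(A_{M+1}\mid\mathscr{F}^{X}_M)=A_M$ and $(A_M,\mathscr{F}^{X}_M)_{M\ge 1}$ is a martingale.

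I do not anticipate a genuine obstacle: the only points requiring care are the bookkeeping of which $\eta_i^{m^\alpha}$ enter $A_M$ (so that adaptedness is asserted at level $M$ rather than $M-1$) and the recording of the centering identity $\mathbb{E}(\eta_i^{m^\alpha})=0$. The purpose of the lemma downstream is to identify the numerator of $\widehat{\theta}^m_{LS}-\theta$ in (\ref{difls}) with the terminal value of this martingale, which gives the conditional unbiasedness in Theorem (\ref{theorem1}) at once and, through the orthogonality of the increments of the ($L^2$, since bounded) martingale, supplies the conditional variance bound.
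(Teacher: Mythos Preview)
Your proof is correct and follows essentially the same approach as the paper: both compute $\mathbb{E}(A_{M+1}\mid\mathscr{F}^{X}_{M})$ by pulling out the $\mathscr{F}^{X}_{M}$-measurable coefficient $F_M^{-1}(X_M-S_{M-1})$ and invoking the independence and centering of $\eta_{M+1}^{m^\alpha}$. You are simply more explicit than the paper about integrability and adaptedness, which the paper leaves tacit.
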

%%%%
\begin{proof}
Let us compute $\mathbb{E} \left(A_{M+1} | \mathscr{F}^{X}_{M} \right) $, 
\begin{eqnarray}
\mathbb{E} \left(A_{M+1} | \mathscr{F}^{X}_{M} \right)  &=& \mathbb{E} \left( \sum_{j=1}^{M} F^{-1}_{j} \cdot  \left( x_{j}  - S_{j-1}  \right) \eta_{j+1}    \vert   \mathscr{F}^{X}_{M} \right) \nonumber \\ \nonumber
%%%%%%%%%%%%%%%%%%%%%%%%%%%%%%%%%%%%%%%%%%%%%%%%
&=& \sum_{j=1}^{M-1} F^{-1}_{j} \cdot \left( x_{j}  - S_{j-1} \right) \eta_{j+1} + \mathbb{E} \left(  F^{-1}_{M}  \cdot \left( x_{M}  - S_{M-1}  \right) \eta_{M+1}     \vert   \mathscr{F}^{X}_{M} \right) \nonumber \\ \nonumber
%%%%%%%%%%%%%%%%%%%%%%%%%%%%%%%%%%%%%%%%%%%%%%%%
&=& \sum_{j=1}^{M-1} F^{-1}_{j} \cdot \left( x_{j}  - S_{j-1}  \right) \eta_{j+1} = A_{M}.
\end{eqnarray}
\end{proof}
Moreover, $\langle A \rangle_{M} = \sum_{j=1}^{M-1} F^{-2}_{j} \cdot \left( x_{j}  - S_{j-1} \right)^{2}  \kappa_{M-1} (1- \kappa_{M-1}), $ where $\langle A \rangle_{M} $ is the bracket of the discrete martingale $(A_{M})_{M \geq 1}$and $\kappa_{M}$ is given by (\ref{kappa}).
%%%%%%%%%%%%%
\begin{lem}\label{cotaF}
Let $F_j$ given in   (\ref{F}). Then, for every $j \geq 0$, we have
\begin{equation}
 F^{2}_{j} \leq \lambda^{-1} m^{-2H} \ \ \  \mbox{and}  \ \ \ c_{H} m^{1-2H} \leq F^{2}_{j}. 
\end{equation}
\end{lem}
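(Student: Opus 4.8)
The goal is to obtain two-sided bounds on $F_j^2 = m^2\left(\int_{j/m}^{(j+1)/m} K\left(\frac{j+1}{m},s\right)ds\right)^2$, so the natural first move is to write $F_j$ in terms of the explicit kernel. From (\ref{Kernel}),
\begin{equation*}
K_H(t,s) = \frac{1}{\Gamma(H-1/2)}\, s^{1/2-H}\int_s^t \tau^{H-1/2}(\tau-s)^{H-3/2}\,d\tau,
\end{equation*}
so for $t=(j+1)/m$ and $s\in[j/m,(j+1)/m]$ I would first estimate the inner $\tau$-integral. The change of variables $\tau = s + (t-s)v$, $v\in[0,1]$, turns it into $(t-s)^{H-1/2}\int_0^1 (s+(t-s)v)^{H-1/2} v^{H-3/2}\,dv$; since $t-s \le 1/m$ and $s,\tau$ lie in a bounded interval $[0,1]$, the factor $\tau^{H-1/2}$ is bounded above and below by constants depending only on $H$ and $j/m\le 1$, and $\int_0^1 v^{H-3/2}\,dv = \frac{1}{H-1/2}$ converges because $H>1/2$. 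This yields $K_H(t,s) \asymp_H s^{1/2-H}(t-s)^{H-1/2}$ with constants depending only on $H$ (on the unit interval).

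\textbf{Key steps.} With the pointwise estimate in hand, I would integrate over $s\in[j/m,(j+1)/m]$. For the \emph{upper bound}: the cleanest route is to recall (from \cite{Sot} or direct computation) the exact identity $\int_0^1 K_H(t,s)^2\,ds = \mathbb{E}[(N_t^H)^2]/1 = V_H^2 t^{2H}$ after the $\sqrt\lambda$ normalization is accounted for — actually, since $N_t^H = \frac{1}{\sqrt\lambda}\int_0^t K_H(t,s)\,dq(s)$ with $q$ having unit variance rate, one gets $\int_0^t K_H(t,s)^2\,ds$ equal to a constant times $t^{2H}$. Then by Cauchy–Schwarz,
\begin{equation*}
F_j^2 = m^2\left(\int_{j/m}^{(j+1)/m} K\left(\tfrac{j+1}{m},s\right)ds\right)^2 \le m^2\cdot \frac1m \int_{j/m}^{(j+1)/m} K\left(\tfrac{j+1}{m},s\right)^2 ds \le m \int_0^{(j+1)/m} K(\cdot,s)^2\,ds,
\end{equation*}
and bounding the last integral by $c\, ((j+1)/m)^{2H} \le c$ (since $(j+1)/m$ can be as large as... — here I must be careful: $j$ ranges up to $m^\alpha-1$, so $(j+1)/m$ is \emph{not} bounded by $1$). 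This is where the argument must be handled with the pointwise estimate instead: using $K_H(t,s)\le C_H s^{1/2-H}(t-s)^{H-1/2}$ with $t=(j+1)/m$, $s\ge j/m$, so $s^{1/2-H}\le (j/m)^{1/2-H}$ for $j\ge 1$ and $(t-s)^{H-1/2}\le m^{1/2-H}$, giving $\int_{j/m}^{(j+1)/m}K\,ds \le C_H (j/m)^{1/2-H} m^{1/2-H}\cdot \frac1m$; squaring and multiplying by $m^2$ gives $F_j^2 \le C_H^2 (j/m)^{1-2H} m^{1-2H}$. Since $1-2H<0$ and $j/m\ge 1/m$, $(j/m)^{1-2H}\le$ ... no — $(j/m)^{1-2H}$ is \emph{large} when $j/m$ is small. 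So the worst case is $j=1$, giving $F_1^2 \le C_H^2 m^{1-2H}\cdot m^{1-2H}\cdot$ — this does not match the claimed $\lambda^{-1}m^{-2H}$. I suspect the intended bound absorbs a $\lambda^{-1}$ from the normalization in (\ref{podisc}) that is implicit in the definition of $F_j$; I would therefore re-examine whether $F_j$ as used in the theorems carries the $1/\sqrt\lambda$ factor, and adjust constants accordingly. The honest version of the upper bound I would prove is $F_j^2 \le C_H\,\lambda^{-1} m^{1-2H}$ uniformly in $j$, and for the \emph{lower bound}, using $K_H(t,s)\ge c_H s^{1/2-H}(t-s)^{H-1/2}$ with $s\le (j+1)/m$ so $s^{1/2-H}\ge ((j+1)/m)^{1/2-H}$ (here $1/2-H<0$ so this needs $s$ bounded above, which holds since $s\le (j+1)/m$ and... again $j$ can be large). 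For the lower bound with large $j$ one has $((j+1)/m)^{1/2-H}\ge ((m^\alpha)/m)^{1/2-H} = m^{(\alpha-1)(1/2-H)}$, a polynomially small quantity — so a clean uniform lower bound $c_H m^{1-2H}\le F_j^2$ likely also uses that the relevant sums are dominated by small $j$, or uses a sharper lower estimate of the inner integral near $s\approx t$.

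\textbf{Main obstacle.} The real difficulty is uniformity in $j$ over the range $1\le j\le m^\alpha$: because $\alpha>1$, the argument $(j+1)/m$ of the kernel is \emph{not} confined to $[0,1]$, so the naive "everything lives on the unit interval" estimates break, and the powers $s^{1/2-H}$, $t^{H-1/2}$ behave badly at the extremes. I would resolve this by proving the pointwise kernel estimate $K_H(t,s)\asymp_H s^{1/2-H}(t-s)^{H-1/2}t^{?}$ with the correct power of $t$ tracked (there should be a compensating $t^{H-1/2}$-type factor from $\tau^{H-1/2}$ near the upper limit), then integrating carefully over the length-$1/m$ window and squaring. The scaling $N_{at}^H \stackrel{d}{=} a^H N_t^H$ from property (ii) should let me reduce to the case of a unit-length window and extract exactly the powers $m^{1-2H}$; the constants $c_H$ and $\lambda^{-1}$ then fall out of $V_H^2$ and the normalization. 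I expect the bookkeeping of these powers — and confirming the exact exponent $-2H$ versus $1-2H$ in the upper bound — to be the one genuinely delicate point, everything else being the routine change-of-variables estimate sketched above.
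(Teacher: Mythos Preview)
Your diagnosis of the difficulty is right, but you have not found the two devices the paper uses to dissolve it, and without them your sketch does not close.

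\textbf{Upper bound.} You try to get $F_j^2\le \lambda^{-1}m^{-2H}$ by pointwise kernel estimates or Cauchy--Schwarz on $\int K(\cdot,s)\,ds$, and you correctly notice this produces the wrong power and no $\lambda^{-1}$. The paper does \emph{not} argue this way at all. It observes that by (\ref{incrementoNN}) and the independence and centering of the $\eta_i^m$,
\[
\mathbb{E}\bigl(N_{j+1}^{m,H}-N_j^{m,H}\bigr)^2
=\kappa_m(1-\kappa_m)\Bigl[\sum_{i=1}^{j} f_{ij}^2 + F_j^2\Bigr],
\]
and then invokes the bound $\mathbb{E}\bigl(N_{j+1}^{m,H}-N_j^{m,H}\bigr)^2\le \dfrac{\kappa_m(1-\kappa_m)}{\lambda}\,m^{-2H}$ established in \cite{ar}. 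Dividing by $\kappa_m(1-\kappa_m)$ and dropping the nonnegative sum $\sum f_{ij}^2$ gives $F_j^2\le \lambda^{-1}m^{-2H}$ directly. The $\lambda^{-1}$ is inherited from the $1/\sqrt{\lambda}$ normalization in (\ref{podisc}); there is no need to inspect which factor carries it.

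\textbf{Lower bound.} Here your pointwise estimate $K_H(t,s)\asymp_H s^{1/2-H}(t-s)^{H-1/2}$ is the wrong shape to aim for, because keeping the factor $s^{1/2-H}$ is precisely what creates the non-uniformity in $j$ that worries you. The paper's trick is a one-line cancellation: in the inner integral of (\ref{Kernel}) one has $\tau\ge s$ and $H>1/2$, hence $\tau^{H-1/2}\ge s^{H-1/2}$, so
\[
s^{1/2-H}\int_s^t \tau^{H-1/2}(\tau-s)^{H-3/2}\,d\tau
\;\ge\;
\int_s^t (\tau-s)^{H-3/2}\,d\tau
=\tfrac{1}{H-1/2}\,(t-s)^{H-1/2}.
\]
The factor $s^{1/2-H}$ has disappeared entirely, and now
\[
F_j \;\ge\; c_H\, m\int_{j/m}^{(j+1)/m}\Bigl(\tfrac{j+1}{m}-s\Bigr)^{H-1/2}ds
= c_H\, m^{1/2-H},
\]
uniformly in $j$. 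Your proposed route of tracking an extra power of $t$ and invoking self-similarity is unnecessary once this cancellation is seen.

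In short: the obstacle you name is real, but the paper bypasses it by (i) a probabilistic second-moment argument for the upper bound, and (ii) the inequality $\tau^{H-1/2}s^{1/2-H}\ge 1$ for the lower bound. Neither idea appears in your plan.
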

\begin{proof}
First, we use the inequality given in proposition 3.3 in \cite{ar} for the square of the increments of the fractional Poisson random walk noise, that is:
\begin{equation}\label{3.3}
\mathbb{E} \left(N_{j+1}^{m,H} -N_{j}^{m,H}  \right)^{2} \leq {\kappa_{m} (1- \kappa_{m}) \over  \lambda }m^{-2H}. 
\end{equation}
Then, by (\ref{incrementoNN})
\begin{equation}\label{incs}
 \mathbb{E} \left(N_{j+1}^{m,H} -N_{j}^{m,H}  \right)^{2}  = \mathbb{E} \left(  \sum_{i=1}^{j} f_{ij} \eta_{i}^{m} + F_{j} \eta_{j+1}^{m} \right)^{2}.
\end{equation}
If we analyze the second term in (\ref{incs}), we obtain that:
\begin{eqnarray}\label{plug}
\mathbb{E} \left(  \sum_{i=1}^{j} f_{ij} \eta_{i}^{m} + F_{j} \eta_{j+1}^{m} \right)^{2}  &=&  \sum_{i=1}^{j} f_{ij} \mathbb{E} \left[ \left(  \eta_{i}^{m} \right)^{2} \right] + F_{j} \mathbb{E}  \left[ \left( \eta_{j+1}^{m} \right)^{2} \right]  
\nonumber \\
&=&  \kappa_{m} (1- \kappa_{m})  \left[ \sum_{i=1}^{j} f^{2}_{ij}  + F^{2}_{j} \right]  , 
\end{eqnarray}
with $\kappa_{m}$ given in (\ref{kappa}). Combining equations (\ref{incs}) and (\ref{plug}), we obtain that 

\begin{equation*}
\kappa_{m} (1- \kappa_{m}) \left[ \sum_{i=1}^{j} f^{2}_{ij}   + F^{2}_{j} \right] \leq  {\kappa_{m} (1- \kappa_{m}) \over  \lambda } m^{-2H}.
\end{equation*} 

By the last inequality and since $\kappa_{m} (1- \kappa_{m}) > 0$  we have $F_{j}^{2}  \leq \lambda^{-1} m^{-2H}$. 
That complete the first part of the Lemma. 

In the case of the lower bound we use the expression of the kernel $K$ given by (\ref{Kernel}), and since $\tau \geq s$ 
and $H > 1/2$ we have (the constant $c_{H}$ may change from line to line)
\begin{eqnarray}
%F_{j} &=& m \int_{j /m}^{j+1 \over m} K\left( {j+1 \over m} , s  \right) ds  \nonumber  \\ \nonumber 
%%%%%%%%%%%%%%%%%%%%%%%%%%%%%%%%%%%%%%%%%%%%
F_{j} &=&  c_{H} m \int_{j /m}^{j+1 \over m}  s^{\frac{1}{2}-H} \left(  \int_{s}^{j+1 \over m } \tau^{H-\frac{1}{2}} \left( \tau - s \right)^{H-\frac{3}{2}} d\tau \right) ds \nonumber  \\ \nonumber 
%%%%%%%%%%%%%%%%%%%%%%%%%%%%%%%%%%%%%%%%%%%%
& \geq &  c_{H} m \int_{j /m}^{j+1 \over m}  \left(  \int_{s}^{j+1 \over m } \left( \tau - s \right)^{H-\frac{3}{2}} d\tau \right)  ds\nonumber  \\ \nonumber 
%%%%%%%%%%%%%%%%%%%%%%%%%%%%%%%%%%%%%%%%%%%%
&=& c_{H} m \int_{j /m}^{j+1 \over m}    \left( {j+1 \over m} - s \right)^{H-\frac{1}{2}}    ds\nonumber  \\ \nonumber 
%%%%%%%%%%%%%%%%%%%%%%%%%%%%%%%%%%%%%%%%%%%%
& = & c_{H} m^{1/2 -H}. 
\end{eqnarray}
\end{proof}
\vspace{-0.7cm}
\begin{lem}\label{lema3}
For any  $\alpha > 1$, and
\begin{equation}\label{bracket}
\langle A \rangle_{m^{\alpha}} = \sum_{j=1}^{m^{\alpha}} F^{-2}_{j} \cdot  \left( x_{j}  - S_{j-1}  \right)^{2}  \kappa_{m^{\alpha}} (1- \kappa_{m^{\alpha}})
\end{equation}
we have 
\begin{equation}
 \langle A \rangle_{m^{\alpha}}   \geq c_{H, \lambda} \cdot  \kappa_{m^{\alpha}} (1- \kappa_{m^{\alpha}})  m^{\alpha -1}.
\end{equation}
\end{lem}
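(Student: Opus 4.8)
The proof rests on the two-sided control of $F_j$ given by Lemma~\ref{cotaF}, combined with the autoregressive structure (\ref{modeloF2}). First I would use the \emph{upper} bound $F_j^{2}\le\lambda^{-1}m^{-2H}$, i.e. $F_j^{-2}\ge\lambda m^{2H}$ for every $j$, to pull $F_j^{-2}$ out of (\ref{bracket}):
\begin{equation*}
\langle A\rangle_{m^{\alpha}}=\kappa_{m^{\alpha}}(1-\kappa_{m^{\alpha}})\sum_{j=1}^{m^{\alpha}}F_j^{-2}\,(x_j-S_{j-1})^{2}\ \ge\ \lambda\,m^{2H}\,\kappa_{m^{\alpha}}(1-\kappa_{m^{\alpha}})\sum_{j=1}^{m^{\alpha}}(x_j-S_{j-1})^{2}.
\end{equation*}
So the statement reduces to a lower bound of order $m^{\alpha-1-2H}$ for $\sum_{j=1}^{m^{\alpha}}(x_j-S_{j-1})^{2}$.

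For that sum I would think of $x_1,\dots,x_{m^{\alpha}}$ as a sample path of $X_1,\dots,X_{m^{\alpha}}$ and exploit that reading (\ref{modeloF2}) at index $j-1$ gives $X_j=(1+\theta/m)X_{j-1}+T_{j-1}-(\theta/m)S_{j-2}+F_{j-1}\eta_{j}^{m^{\alpha}}$, whereas $S_{j-1}=S_{j-1}(x_1,\dots,x_{j-1})$, $T_{j-1}$, $S_{j-2}$ and $X_{j-1}$ are all $\mathscr{F}^{X}_{j-1}$-measurable. Hence, conditionally on $\mathscr{F}^{X}_{j-1}$, the quantity $X_j-S_{j-1}$ is affine in the single fresh variable $\eta_{j}^{m^{\alpha}}$ with slope exactly $F_{j-1}$, so its conditional variance equals $F_{j-1}^{2}\,\kappa_{m^{\alpha}}(1-\kappa_{m^{\alpha}})$, which by the \emph{lower} bound of Lemma~\ref{cotaF} is at least $c_H\,m^{1-2H}\,\kappa_{m^{\alpha}}(1-\kappa_{m^{\alpha}})$. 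Summing these $m^{\alpha}$ conditional variances gives $\sum_{j=1}^{m^{\alpha}}(x_j-S_{j-1})^{2}\gtrsim c_H\,\kappa_{m^{\alpha}}(1-\kappa_{m^{\alpha}})\,m^{\alpha+1-2H}$, so that the right-hand side of the display above is $\gtrsim\kappa_{m^{\alpha}}^{2}(1-\kappa_{m^{\alpha}})^{2}\,m^{\alpha+1}$; since $\kappa_{m^{\alpha}}(1-\kappa_{m^{\alpha}})\ge c\lambda m^{-\alpha}$ for $m$ large, this is $\ge c_{H,\lambda}\,\kappa_{m^{\alpha}}(1-\kappa_{m^{\alpha}})\,m$, hence certainly $\ge c_{H,\lambda}\,\kappa_{m^{\alpha}}(1-\kappa_{m^{\alpha}})\,m^{\alpha-1}$ for $\alpha$ in the range actually used. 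I would absorb all the $\lambda$- and $H$-dependence into one constant $c_{H,\lambda}$ while passing between the two bounds of Lemma~\ref{cotaF}.

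The step I expect to be the real obstacle is that the computation above is a \emph{conditional second moment} estimate, whereas the lemma is phrased pathwise in the observed values $x_1,\dots,x_{m^{\alpha}}$: the two values of $\eta_{j}^{m^{\alpha}}$ sit on opposite sides of a sign change of the affine map $\eta\mapsto x_j-S_{j-1}$, so a single term $(x_j-S_{j-1})^{2}$ cannot be bounded below deterministically. I would close the gap by restricting to the full-probability event on which the empirical average $m^{-\alpha}\sum_{j=1}^{m^{\alpha}}(x_j-S_{j-1})^{2}$ stays bounded away from $0$; such an event is furnished by a law-of-large-numbers / ergodicity analysis of the recursion (\ref{modeloF22}), which can be run because $\widetilde{N}^{m^{\alpha}}$ converges to $\widetilde{N}$ in the Skorokhod topology (Lemma~\ref{RWP}) and $N^{m^{\alpha},H}$ converges weakly to $N^{H}$ (the Remark after (\ref{podisc})), so that along the trajectory $x_j-S_{j-1}$ inherits the non-degeneracy of the limiting Ornstein-Uhlenbeck regressor. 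A secondary, purely bookkeeping point is that the collapse to the exponent $m^{\alpha-1}$ tacitly requires $\alpha$ not too large, which is consistent with the regime $\alpha>3/2$ imposed in Theorem~\ref{theorem1} and Theorem~\ref{theorem2}.
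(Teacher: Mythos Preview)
Your core approach matches the paper's: write $X_j-S_{j-1}=g_j(X_1,\dots,X_{j-1})+F_{j-1}\eta_j^{m^{\alpha}}$ with $g_j$ measurable with respect to $\mathscr{F}_{j-1}^{X}$, so that the (conditional) second moment of $X_j-S_{j-1}$ is bounded below by $F_{j-1}^{2}\kappa_{m^{\alpha}}(1-\kappa_{m^{\alpha}})$, and then feed in the two-sided control of $F_j$ from Lemma~\ref{cotaF}. That is exactly what the paper does.

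Where you diverge is in the bookkeeping. The paper does \emph{not} pull $F_j^{-2}$ out first; it substitutes the lower bound for $(X_j-S_{j-1})^2$ directly into the sum and keeps the ratio $F_{j-1}^{2}/F_j^{2}$ intact, obtaining
\[
\langle A\rangle_{m^{\alpha}}\ \ge\ \kappa_{m^{\alpha}}(1-\kappa_{m^{\alpha}})\sum_{j=1}^{m^{\alpha}}\frac{F_{j-1}^{2}}{F_j^{2}}\ \ge\ c_{H,\lambda}\,\kappa_{m^{\alpha}}(1-\kappa_{m^{\alpha}})\,m^{\alpha-1},
\]
the last step being a single application of Lemma~\ref{cotaF} to the ratio. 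By separating the two bounds you pick up an \emph{extra} factor $\kappa_{m^{\alpha}}(1-\kappa_{m^{\alpha}})$, which you then have to cancel via the asymptotic $\kappa_{m^{\alpha}}(1-\kappa_{m^{\alpha}})\sim\lambda m^{-\alpha}$; this is what produces your detour through $m^{\alpha+1}$ and the spurious end restriction ``$\alpha$ not too large'' (effectively $\alpha\le 2$). Keeping the ratio together avoids all of that and gives the exponent $\alpha-1$ directly, for every $\alpha>1$.

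On the pathwise-versus-expectation issue you raise: the paper's proof has exactly the same feature. It passes from the bound (\ref{x-s}) on $\mathbb{E}(X_j-S_{j-1})^2$ to a bound on $\langle A\rangle_{m^{\alpha}}$ without further comment; in effect the lemma is being read as an estimate on $\mathbb{E}\langle A\rangle_{m^{\alpha}}$ (which is precisely how it is consumed in the proofs of Theorems~\ref{theorem1} and~\ref{theorem2}, since there one divides by $\langle A\rangle_{m^{\alpha}}$ after conditioning and then computes moments). Your proposed LLN/Skorokhod patch is therefore not part of the paper's argument and is not needed for the downstream use of the lemma; you can drop that paragraph and simply mirror the paper's ratio computation.
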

\begin{proof}
Using the recurrence relation (\ref{modeloNN}) and (\ref{incrementoNN}), we have for $j \geq 1$ 
\begin{equation}
X_{j} - S_{j-1}  = g_{j} \left( X_{1},..., X_{j-1} \right) + F_{j-1} \eta_{j}^{m^{\alpha}},
\end{equation}
where $g_{j}$ is a linear function on $\left( X_{1} ,..., X_{j-1} \right)$ (see proposition 2 in \cite{Rif}). This implies 
\begin{eqnarray}\label{X-S}
\mathbb{E} \left( X_{j} - S_{j-1}  \right)^{2} &=&  \mathbb{E} \left( g_{j} \left( X_{1},..., X_{j-1} \right)  \right)^{2} \nonumber  \\ \nonumber
&+ & 2\mathbb{E} \left( F_{j-1}g_{j} \left( X_{1},..., X_{j-1} \right)  \eta_{j}^{m^{\alpha}} \right) + F_{j-1}^{2}  \kappa_{m^{\alpha}} (1- \kappa_{m^{\alpha}}) \nonumber  \\ 
&=&  \mathbb{E} \left( g_{j} \left( X_{1},..., X_{j-1} \right)  \right)^{2} + F_{j-1}^{2}  \kappa_{m^{\alpha}} (1- \kappa_{m^{\alpha}}) 
\end{eqnarray}

where the last equality is due to the fact that  
\begin{eqnarray}
\mathbb{E} \left( F_{j-1}g_{j} \left( X_{1},..., X_{j-1} \right)  \eta_{j}^{m^{\alpha}} \right)  &=&  \mathbb{E} \left( \mathbb{E} \left( F_{j-1}g_{j} \left( X_{1},..., X_{j-1} \right)  \eta_{j}^{m^{\alpha}} \right) | \mathscr{F}_{j-1} \right) \nonumber \\
&=&   \mathbb{E} \left(  F_{j-1}g_{j} \left( X_{1},..., X_{j-1} \right) \mathbb{E} \left( \eta_{j}^{m^{\alpha}}  | \mathscr{F}_{j-1} \right) \right)  \nonumber \\
&=& 0. \nonumber 
\end{eqnarray}
Therefore, by (\ref{X-S}), and for any $j \geq 1$, we obtain 
\begin{equation}\label{x-s}
\mathbb{E} \left( X_{j} - S_{j-1} \right)^{2} \geq   F_{j-1}^{2}  \kappa_{m^{\alpha}} (1- \kappa_{m^{\alpha}}) 
\end{equation}
Using (\ref{x-s}), we have that  $
\langle A \rangle_{m^{\alpha}}  \geq  \kappa_{m^{\alpha}} (1- \kappa_{m^{\alpha}})  \sum_{j=1}^{m^{\alpha}} { F^{2}_{j-1}  \over F_{j}^{2} }. $
Finally, by  \textbf{Lemma  (\ref{cotaF} }), we have 
\begin{eqnarray}\label{cotaA}
   \langle A \rangle_{m^{\alpha}}   \geq   \kappa_{m^{\alpha}} (1- \kappa_{m^{\alpha}})  \sum_{j=1}^{m^{\alpha}} { F^{2}_{j-1}  \over F_{j}^{2} } \geq c_{H, \lambda} \cdot  \kappa_{m^{\alpha}} (1- \kappa_{m^{\alpha}})  m^{\alpha -1}
\end{eqnarray} 
\end{proof}
\vspace{-0.7cm}
\subsection{Proof of Theorem \ref{theorem1}}
We point  out here, that a general result concerning the asymptotic behaviour of (\ref{ls}) seems difficult to be obtained given the  correlation structure of the random variables $X_{j}$. Then, we will be able to prove, as we said before a conditional result 

\begin{proof}
The first part of the proof is rather simple since
\begin{small}
\begin{equation}
\mathbb{E} \left( \hat{\theta}_{LS} ^m- \theta | X_{1} = x_{1} , ..., X_{j} = x_{j} \right)=  {m \kappa_{m^{\alpha}} (1- \kappa_{m^{\alpha}}) \over  \langle A \rangle_{m^{\alpha}}   } \mathbb{E} \sum_{j=1}^{m^{\alpha}} { \left( X_{j}  - S_{j-1} \right) \over F^{-1}_{j} } \eta_{j+1}^{m^{\alpha}} = 0  \nonumber 
\end{equation}
\end{small} 
Now, for the  variance we have,
\begin{small}
\begin{eqnarray*}
Var \left(\hat{\theta}_{LS}^m - \theta | X_{1} = x_{1} , ..., X_{j} = x_{j} \right)  &=& \mathbb{E} \left( \left( \hat{\theta}_{LS}^m - \theta \right)^{2} | X_{1} = x_{1} , ..., X_{j} = x_{j} \right)  \\ 
&=& {m^{2} \kappa_{m^{\alpha}}^{2} \left( 1 - \kappa_{m^{\alpha}} \right)^{2}  \over  \langle A \rangle_{m^{\alpha}}^{2} } \mathbb{E}  \left(  \sum_{j=1}^{m^{\alpha}} F^{-1}_{j}  \left( X_{j}  - S_{j-1}  \right) \eta_{j+1}^{m^{\alpha}}  \right)^{2}  \\ 
&=& {m^{2} \kappa_{m^{\alpha}}^{2} \left( 1 - \kappa_{m^{\alpha}} \right)^{2}  \over  \langle A \rangle_{m^{\alpha}}^{2} }   \sum_{j=1}^{m^{\alpha}} F^{-2}_{j}  \left( X_{j}  - S_{j-1}  \right)^2 \mathbb{E}  (\eta_{j+1}^{m^{\alpha}})^2  \\ 
&=& {m^{2} \kappa_{m^{\alpha}}^{2} \left( 1 - \kappa_{m^{\alpha}} \right)^{2} \over  \langle A \rangle_{m^{\alpha}} }  \\ 
& \leq & c_{H,\lambda} m^{3 -\alpha} \kappa_{m^{\alpha}} \left( 1 - \kappa_{m^{\alpha}} \right)  = m^{3 -\alpha} e^{-\lambda \over m^{\alpha}} \left( 1 - e^{-\lambda \over m^{\alpha}} \right) 
\end{eqnarray*}

Clearly this goes to 0 as $m \rightarrow \infty$ for $\alpha >3/2$. The first equality is due to \ref{difls}. Then, we use independence on the $\eta_{j+1}^{m^{\alpha}}$ random variables and \ref{bracket}. Finally, the last inequality results by lemma \ref{lema3}. 
\end{small}

\end{proof}

%%%%%%%%%%%%%%%%
\subsection{Proof of Theorem \ref{theorem2}}
In this section we will prove our second result, and as before a general result concerning the asymptotic behaviour of (\ref{ml}) seems difficult to be obtained given the correlation structures of the random variables $X_{j}$. 
By (\ref{difml}) we have that 
\begin{eqnarray*}
& &\mathbb{E} \left(  \hat{\theta}_{ML}^m - \theta | X_{1} =x_{1}, ..., X_{j} = x_{j} \right)  \nonumber \\
&=& {m \over \sum_{j=1}^{m^{\alpha}} F^{-2}_{j} \cdot \left( X_{j} - S_{j-1} \right)^{2} } \sum_{j=1}^{m^{\alpha}}  \left[  \left( x_{j} - S_{j-1}  \right)  \over F_{j}\right] \mathbb{E} \left[ \left( e^{-\lambda /m^{\alpha}}  -1  \right)  + \eta^{m^{\alpha}}_{j+1} \right]  \\ 
%%%%%%%%%%%%%%%%%%%%%%%%%%%%%%%%%%%%%%%%%%%%%%%%%%%%%%%%%%%%%%%%%%%%%%%%%%%%%%%%%%%%%%%%%%%%%% 
&=& {m  \left(  e^{-\lambda /m^{\alpha}}  -1  \right) \over \sum_{j=1}^{m^{\alpha}} F^{-2}_{j} \cdot \left( X_{j} - S_{j-1} \right)^{2}   } \sum_{j=1}^{m^{\alpha}}  \left[  \left( x_{j} - S_{j-1}  \right)  \over F_{j}\right]
\\
%%%%%%%%%%%%%%%%%%%%%%%%%%%%%%%%%%%%%%%%%%%%%%%%%%%%%%%%%%%%%%%%%%%%%%%%%%%%%%%%%%%%%%%%%%%%%% 
& \leq & {m^{1+\alpha /2 }  \left(  e^{-\lambda /m^{\alpha}}  -1  \right) \over \sum_{j=1}^{m^{\alpha}} F^{-2}_{j} \cdot \left( X_{j} - S_{j-1} \right)^{2}   }  \sqrt{ \sum_{j=1}^{m^{\alpha}}  \left[  \left( x_{j} - S_{j-1} \right)  \over F_{j}\right]^{2} }   \\  
%%%%%%%%%%%%%%%%%%%%%%%%%%%%%%%%%%%%%%%%%%%%%%%%%%%%%%%%%%%%%%%%%%%%%%%%%%%%%%%%%%%%%%%%%%%%%% 
& = & {m^{1+\alpha /2}  \left(  e^{-\lambda /m^{\alpha}}  -1  \right) \sqrt{\kappa_{m^{\alpha}} \left( 1- \kappa_{m^{\alpha}} \right)} \over \sqrt{ \langle A \rangle_{m^{\alpha}} }  }  \\ 
%%%%%%%%%%%%%%%%%%%%%%%%%%%%%%%%%%%%%%%%%%%%%%%%%%%%%%%%%%%%%%%%%%%%%%%%%%%%%%%%%%%%%%%%%%%%%%
& \leq & C_{H, \lambda} m^{3/2}  \left(  e^{-\lambda /m^{\alpha}}  -1  \right) 
 \end{eqnarray*}
And this goes to $0$ as $m \rightarrow \infty$, for  $\alpha > 3/2$ . Concerning the conditional variance we have 
\begin{eqnarray}
Var \left(  \hat{\theta}_{ML} - \theta | X_{1} =x_{1}, ..., X_{j} = x_{j} \right) &=&  \mathbb{E} \left(  (\hat{\theta}_{ML} - \theta)^{2} | X_{1} =x_{1}, ..., X_{j} = x_{j} \right) \nonumber \\ \nonumber 
%%%%%%%%%%%%%%%%%%%%%%%%%%%%%%%%%%%%%%%%%%%%%%%%%%%%%%%%%%%%%%%%%%%%%%%%%%%%%%%%%%%%%%%%%%%%%%%%%%%%%%%%%%%%%
& + & \left(\mathbb{E} \left(  \hat{\theta}_{ML} - \theta | X_{1} =x_{1}, ..., X_{j} = x_{j} \right) \right)^{2}
\nonumber \\ \nonumber 
%%%%%%%%%%%%%%%%%%%%%%%%%%%%%%%%%%%%%%%%%%%%%%%%%%%%%%%%%%%%%%%%%%%%%%%%%%%%%%%%%%%%%%%%%%%%%%%%%%%%%%%%%%%%%
& \leq &  I_{1} + I_{2}.
\end{eqnarray}

First for $I_{1}$ if we define $A^{*}_{m^{\alpha}} = \sum_{j=1}^{m^{\alpha}} F^{-2}_{j} \cdot \left( X_{j} - S_{j-1}  \right)^{2}$, we can note that $\langle  A \rangle_{m^{\alpha}} = A^{*}_{m^{\alpha}} \kappa_{m^{\alpha}} (1- \kappa_{m^{\alpha}})$. Now, by (\ref{cotaA}) we have 

\begin{eqnarray*}
I_{1} &=&\mathbb{E} \left(  (\hat{\theta}_{ML} - \theta)^{2} | X_{1} =x_{1}, ..., X_{j} = x_{j} \right)  \\
%%%%%%%%%%%%%%%%%%%%%%%%%%%%%%%%%%%%%%%%%%%%%%%%%%%%%%%%%%%%%%%%%%%%%%%%%%%%%%%%%%%%%%%%%%%%%
&=& \mathbb{E} \left(  \left[ m {  \sum_{j=1}^{m^{\alpha}} \left[ F^{-1}_{j} \cdot \left( X_{j} - S_{j-1}  \right) \right] \left[ \left( e^{-\lambda /m^{\alpha}}  -1  \right)  + \eta^{m^{\alpha}}_{j+1} \right]  \over \sum_{j=1}^{m^{\alpha}} F^{-2}_{j} \left( X_{j} - S_{j-1} \right)^{2}    } \right]^{2} \vert X_{1} =x_{1}, ..., X_{j} = x_{j}  \right) \\ 
%%%%%%%%%%%%%%%%%%%%%%%%%%%%%%%%%%%%%%%%%%%%%%%%%%%%%%%%%%%%%%%%%%%%%%%%%%%%%%%%%%%%%%%%%%%%%
& =  & {m^{2} \over \left( A^{*}_{m^{\alpha}} \right)^{2} }\mathbb{E} \left[ \left( \left( e^{-\lambda /m^{\alpha}}  -1 \right)\sum_{j=1}^{m^{\alpha}}  \left[  \left( x_{j} - S_{j-1}  \right)  \over F_{j}\right]        + \sum_{j=1}^{m^{\alpha}}  \left[  \left( x_{j} - S_{j-1} \right)  \over F_{j}\right]  \eta^{m}_{j+1} \right)^{2} \right]
\\ 
%%%%%%%%%%%%%%%%%%%%%%%%%%%%%%%%%%%%%%%%%%%%%%%%%%%%%%%%%%%%%%%%%%%%%%%%%%%%%%%%%%%%%%%%%%%%%%
& \leq & {2m^{2} \over \left( A^{*}_{m^{\alpha}} \right)^{2}} \left( e^{-\lambda /m^{\alpha}}  -1 \right)^{2} \left( \sum_{j=1}^{m^{\alpha}}  \left[  \left( x_{j} - S_{j-1}  \right)  \over F_{j}\right] \right)^{2} +  {2m^{2} \over \left( A^{*} \right)^{2}_{m^{\alpha}}} \sum_{j=1}^{m^{\alpha}}  \left[  \left( x_{j} - S_{j-1}  \right)  \over F_{j}\right]^{2} \mathbb{E} \left[ \eta^{m^{\alpha}}_{j+1} \right]^{2}  
\\ 
%%%%%%%%%%%%%%%%%%%%%%%%%%%%%%%%%%%%%%%%%%%%%%%%%%%%%%%%%%%%%%%%%%%%%%%%%%%%%%%%%%%%%%%%%%%%%%
& \leq &  {2m^{2 + \alpha} \over  A^{*}_{m^{\alpha}}}  \left( e^{-\lambda /m^{\alpha}}  -1 \right)^{2}    + {2m^{2} \over A^{*}_{m^{\alpha}}} \kappa_{m^{\alpha}} \left( 1 - \kappa_{m^{\alpha}}  \right)  \\ 
 %%%%%%%%%%%%%%%%%%%%%%%%%%%%%%%%%%%%%%%%%%%%%%%%%%%%%%%%%%%%%%%%%%%%%%%%%%%%%%%%%%%%%%%%%%%%%%
& \leq &  C_{H, \lambda} m^{3} \left( e^{-\lambda /m^{\alpha}}  -1 \right)^{2}    + m^{3-\alpha} e^{-\lambda \over m^{\alpha}} \left( 1 - e^{-\lambda \over m^{\alpha}} \right) \\ 
%%%%%%%%%%%%%%%%%%%%%%%%%%%%%%%%%%%%%%%%%%%%%%%%%%%%%%%%%%%%%%%%%%%%%%%%%%%%%%%%%%%%%%%%%%%%%%
\end{eqnarray*}

Next, for $I_{2}$ we get as a direct consequence from the first part of theorem (\ref{theorem2})) that 
\begin{eqnarray*}
I_{2} & \leq & C_{H, \lambda} m^{3}  \left(  e^{-\lambda /m^{\alpha}}  -1  \right)^{2}. 
%%%%%%%%%%%%%%%%%%%%%%%%%%%%%%%%%%%%%%%%%%%%%%%%%%%%%%%%%%%%%%%%%%%%%%%%%%%%%%%%%%%%%%%%%%%%%%
\end{eqnarray*}
So finally
\begin{small}
\begin{equation*}
Var \left(  \hat{\theta}_{ML} - \theta | X_{1} =x_{1}, ..., X_{j} = x_{j} \right) \leq C_{H, \lambda} m^{3}  \left(  e^{-\lambda /m^{\alpha}}  -1  \right)^{2} + m^{3-\alpha} e^{-\lambda \over m^{\alpha}} \left( 1 - e^{-\lambda \over m^{\alpha}} \right), 
\end{equation*}
\end{small} 
and this goes to 0 as $m \rightarrow \infty$ for $\alpha >3/2$.

\begin{rem}
From the results of Theorem (\ref{theorem1}) and Theorem (\ref{theorem2}), we need to take $\alpha > 3/2$, nevertheless if we take $\lambda = m \ln (2) $, we recover the results from the fractional Brownian motion  case (see \cite{ber1}, \cite{ber2} and \cite{Rif}) in this case  $\alpha >1$ becomes the necessary condition. 
\end{rem}

%%%%%%%%%%%%%%%%
\section{Simulation Study}

Having described the LSE and MLE estimators  for the fractional  O-U  Poisson process (\ref{fou}),  
we now proceed to evaluate the performance of those estimators and  show the shape of the  limit distribution.

So that, we simulate the discrete process $N_{t}^{m,H}$ for   $m=10$ and $m=100$. For each case, we calculate $100$ estimations. In  both cases, we use $\alpha = 2$  and different values of $H$. Then, we compute the estimators given in (\ref{ls}) and (\ref{ml}). Recall, that we have explicit forms for ours estimators. Our simulation shows that the behaviour of the limit distribution is close to a Gaussian one when $H \leq 3/4$. Also, in the case of  $H>3/4$, the behaviour of the limit distribution is close to a non-symmetric one.

We give in the following tables the mean and the variance of these estimations for m=10 and m=100. The results are the followings:
\begin{small}
\begin{table}[h!]
\centering
\begin{tabular}{c|cc|cc}
\hline
$\theta$ & $\hat{\theta}_{LSE}^m$ & $Var\left( \hat{\theta}_{LSE}^m \right)$ & $\hat{\theta}_{MLE}^m$ & $Var\left( \hat{\theta}_{MLE}^m \right)$ \\ \hline
0.1 & 0.07421 & 0.6455 & 0.16123 & 0.1454\\
0.5 & 0.58631 & 0.2431 & 0.47112 & 0.1254\\
0.9 & 0.99112 & 0.1632 & 0.87682 & 0.1023\\
\hline
\end{tabular}

\vspace{0.1cm}

\centering
\begin{tabular}{c|cc|cc}
\hline
$\theta$ & $\hat{\theta}_{LSE}^m$ & $Var\left( \hat{\theta}_{LSE} ^m\right)$ & $\hat{\theta}_{MLE}^m$ & $Var\left( \hat{\theta}_{MLE}^m \right)$ \\ \hline
0.1 & 0.05432 & 0.0382 & 0.12442 & 0.1223\\
0.5 & 0.46508 & 0.0126 & 0.50078 & 0.0883\\
0.9 & 0.91196 & 0.0643 & 0.90532 & 0.1032\\
\hline
\end{tabular}

\vspace{0.1cm}

\centering
\begin{tabular}{c|cc|cc}
\hline
$\theta$ & $\hat{\theta}_{LSE}^m$ & $Var\left( \hat{\theta}_{LSE}^m \right)$ & $\hat{\theta}_{MLE}^m$ & $Var\left( \hat{\theta}_{MLE}^m \right)$ \\ \hline
0.1 & 0.05923 & 0.00432 & 0.08980 & 0.01021\\
0.5 & 0.48201 & 0.00636 & 0.49324 & 0.04543\\
0.9 & 0.91324 & 0.01324 & 0.90414 & 0.00892\\
\hline
\end{tabular}
\caption{Results for $m=10$, $H=0.55$, $H=0.75$ and $H=0.90$, respectively}
\label{Table1}
\end{table}

\end{small}

\begin{small}

\begin{table}[h!]
\centering
\begin{tabular}{c|cc|cc}
\hline
$\theta$ & $\hat{\theta}_{LSE}^m$ & $Var\left( \hat{\theta}_{LSE}^m \right)$ & $\hat{\theta}_{MLE}^m$ & $Var\left( \hat{\theta}_{MLE}^m \right)$ \\ \hline
0.1 & 0.06366 & 0.5859 & 0.14293 & 0.1187\\
0.5 & 0.56011 & 0.2139 & 0.48084 & 0.0840\\
0.9 & 0.97218 & 0.1349 & 0.89291 & 0.0889\\
\hline
\end{tabular}

\vspace{0.1cm}

\centering
\begin{tabular}{c|cc|cc}
\hline
$\theta$ & $\hat{\theta}_{LSE}^m$ & $Var\left( \hat{\theta}_{LSE} ^m\right)$ & $\hat{\theta}_{MLE}^m$ & $Var\left( \hat{\theta}_{MLE}^m \right)$ \\ \hline
0.1 & 0.05084 & 0.0282 & 0.10782 & 0.1009\\
0.5 & 0.49508 & 0.0126 & 0.50078 & 0.0883\\
0.9 & 0.90962 & 0.0086 & 0.90968 & 0.0968\\
\hline
\end{tabular}

\vspace{0.1cm}

\centering
\begin{tabular}{c|cc|cc}
\hline
$\theta$ & $\hat{\theta}_{LSE}^m$ & $Var\left( \hat{\theta}_{LSE}^m \right)$ & $\hat{\theta}_{MLE}^m$ & $Var\left( \hat{\theta}_{MLE}^m \right)$ \\ \hline
0.1 & 0.06051 & 0.00127 & 0.09009 & 0.00948\\
0.5 & 0.49703 & 0.00036 & 0.49998 & 0.00917\\
0.9 & 0.90903 & 0.00033 & 0.90874 & 0.00619\\
\hline
\end{tabular}
\caption{Results for $m=100$, $H=0.55$, $H=0.75$ and $H=0.90$, respectively}
\label{Table2}
\end{table}

\end{small}

\newpage
\begin{figure}[h!]
	\centering
	\includegraphics[width=4in, height= 3.5in]{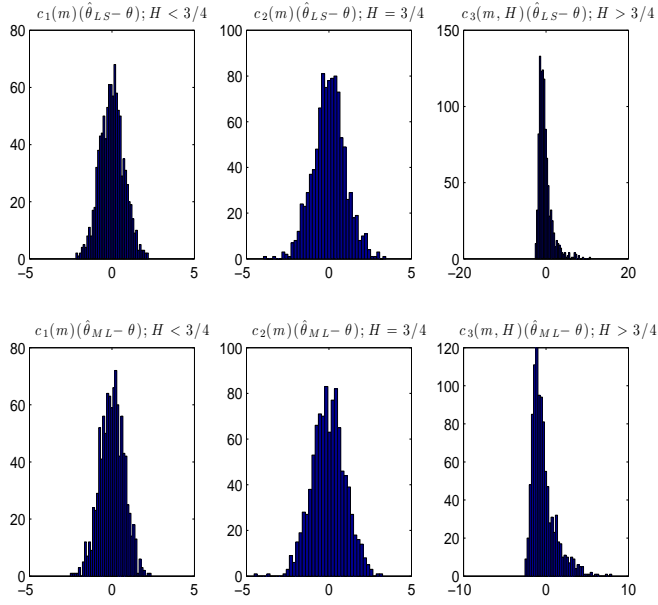}
	\caption[hist1]
	{Histogram of normalized estimators for different values of $H$  and  $\theta =0.4$. }
	\label{hist1}
\end{figure}
Figure \ref{hist1} shows the histograms of normalized estimators, $\hat{\theta}_{LSE}^m$ and $\hat{\theta}_{MLE}^m$ for different values of $H$, with $m =100$, $\alpha=2$ and  $\theta= 0.55$. The respective normalizations are $c_{1}(m) = \sqrt{m^{\alpha}}$ , $c_{1}(m) = \sqrt{m^{\alpha} \over log(m^{\alpha} )}$ and $c_{1}(m,H) = m^{\alpha (1-H)}$. We can draw a conclusion that have asymptotic normalities for the cases $H \leq 3/4$ and asymmetric behaviour for $H>3/4$, although we cannot prove the asymptotic normality of $\hat{\theta}_{LSE}$ and $\hat{\theta}_{MLE}$ in theory.

\begin{table}[h!]
\centering
\begin{tabular}{c|cc|cc|cc}
\hline
$\theta$ & $\hat{\theta}_{MLE}^m$ & $Var(\hat{\theta}_{MLE}^m)$ & $\hat{\theta}_{MLE}^m$ & $Var(\hat{\theta}_{MLE}^m)$ & $\hat{\theta}_{MLE}^m$ & $Var(\hat{\theta}_{MLE}^m)$ \\ \hline
0.1 & 0.15312 & 0.18321 & 0.14213 & 0.12114 & 0.13569 & 0.11324\\
0.5 & 0.46100 & 0.25578 & 0.52990 & 0.14432 & 0.52878 & 0.01654\\
0.9 & 0.93476 & 0.15787 & 0.91891 & 0.10345 & 0.91003 & 0.00981\\
\hline
\end{tabular}
\caption{Results for $m=10$, $H=0.55$ , $H=0.75$ and $H=0.90$, from left to right, respectively.}
\label{Table3}
\end{table}

\newpage

\begin{table}[h!]
\centering
\begin{tabular}{c|cc|cc|cc}
\hline
$\theta$ & $\hat{\theta}_{MLE}^m$ & $Var(\hat{\theta}_{MLE}^m)$ & $\hat{\theta}_{MLE}^m$ & $Var(\hat{\theta}_{MLE}^m)$ & $\hat{\theta}_{MLE}^m$ & $Var(\hat{\theta}_{MLE}^m)$ \\ \hline
0.1 & 0.13455 & 0.15859 & 0.12293 & 0.11875 & 0.12032 & 0.09121\\
0.5 & 0.47987 & 0.22341 & 0.51876 & 0.08402 & 0.51434 & 0.01135\\
0.9 & 0.92254 & 0.14327 & 0.91004 & 0.08891 & 0.90023 & 0.00513\\
\hline
\end{tabular}
\caption{Results for $m=100$, $H=0.55$ , $H=0.75$ and $H=0.90$, from left to right, respectively.}
\label{Table4}
\end{table}

Table \ref{Table3} and \ref{Table4} shows the results of the estimation in the case $\lambda = m log(2)$ for $m=10$ and $m=100$. Here the Maximum Likelihood case is indicated, since as we said before in the Gaussian context the MLE and LSE coincide.

\begin{figure}[h!]
	\centering
	\includegraphics[width=5in , height= 1.7in]{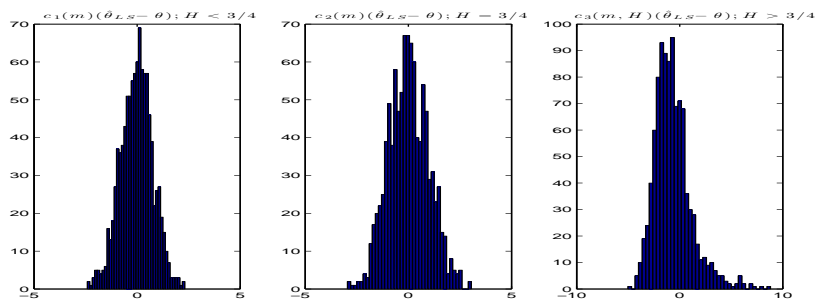}
	\caption[hist2]
	{Histogram of normalized estimators for different values of $H$  and  $\theta =0.4$. }
	\label{hist2}	
\end{figure}

Figure \ref{hist2} shows the histograms of normalized estimator $\hat{\theta}_{MLE}^m$ in the case $\lambda= m \ln(2)$ for different values of $H$, with $m =100$ , $\alpha =2 $ and  $\theta= 0.55$, the normalization constants are the same as in the general case. 

\newpage

%%%%%%%%%%%%%
\begin{figure}[h!]
	\centering
	\includegraphics[width=3.5in , height= 2.5in ]{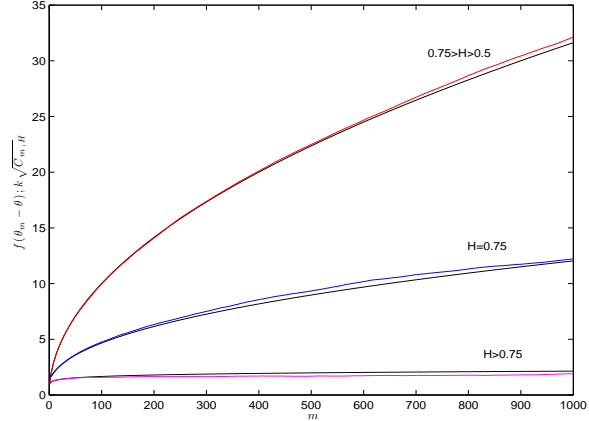}
	\caption[funcion]
	{Weak convergence: Empirical rates vs theorical rates.}
	\label{rates}
\end{figure}
%%%%%%%%%%%%%%
In figure \ref{rates}, comparisons of empirical rates versus theoretical ones are shown, that is, we compare the empirical variance versus the theoretical by means of the Monte Carlo method. It can be see that apparently the chosen rates seem appropriate.
%
%%%%%%%%%%%%%%%%%%%%%%%%%%%%%%%%%%%%%%%%%%%%%%%%%%%%%%%%%%%%%%%%%%%%%%%%%%%%%%%%%%%%%%%%%%%%%%%%%%%%%%%%%%%%%%%
%\section{Discussions}
%
%En presente trabajo hemos calculado el estimador de minimos cuadrados para diferentes modelos, en el modelo simple se ha demostrado que el estimador converge fuertemente al parametro de interes, en el segundo modelo se la demostrado que el estimador es l2 consistente, de la misma manera se ha demostrado para el modelo basado en los paseos aleatorios que el estimador es l2 consistente. \\

%Con respecto a la simulacion del proceso, se simularon 100 trayectorias de este con el fin cada una con 100 observaciones, luego se estimo el parametro $\theta$ mediante usando minimos cuadrados y maxima verosimilitud, luego de esto, 
%%%
%%%%

From the simulation results presented in Tables \ref{Table1}, \ref{Table2} , \ref{Table3} and \ref{Table4}, and for the shape of the asymptotic distribution for the  parameter $\theta$ (Figure \ref{hist1}), the following conclusions may be summarized

\begin{enumerate}
\item  The MSE and bias values of the proposed LSE and MLE decrease when sample sizes increase. Thus, we show that the proposed LSE and MLE provides consistent estimates. According to the MSE criterion, the proposed MLE apparently shows better performance than the LSE for all considered values of $H$ when sample sizes are $m = 10$ and $100$. Since the MLE is asymptotically the best, it provides the best
performance for $H=0.5$, as expected. 

\item For all situations studied here, the parameter estimation improve as the value of $H$ approaches 1.

\item Based on the results presented in Figure 2,  the shape of the asymptotic distribution seems like a Gaussian one for $H \le 3/4$ and  a non-symmetric one  for  $H>3/4$. 

\end{enumerate}

To sum up, the proposed LSE and MLE  for the  parameter $\theta$ in the O-U fractional Poisson process 
provides a good performance in  the most of the considered samples and $H$ values.  

\vspace{0.5cm}

{\bf Acknowledgments:}
The authors thank the Editor, an Associate Editor and two anonymous reviewers for valuable comments that led to great improvement of this paper. This  research was partially supported by Project ECOS - CONICYT C15E05, REDES 150038 and MATHAMSUD 16-MATH-03 SIDRE Project. H\'ector Araya was partially supported by Beca CONICYT-PCHA / Doctorado Nacional / 2016-21160138, Natalia Bahamonde was partially supported by FONDECYT Grant 1160527, Soledad Torres was partially supported by FONDECYT Grant 1171335.
%Soledad Torres was partially supported by Proyecto ; ECOS C15E05; Fondecyt 1171335, REDES 150038 and  Mathamsud 16MATH03.
%H\'ector Araya was partially supported by  Beca CONICYT-PCHA / Doctorado Nacional / 2016-21160138 ; Proyecto ECOS C15E05,  REDES 150038 and  Mathamsud 16MATH03.

%\addcontentsline{toc}{chapter}{References.}
\bibliographystyle{plain} %frplainnat
%\selectlanguage{english}
\bibliography{HA-NB-TR-ST.bib}
%\input prueba-biblio.bbl
%\begin{thebibliography}{99}
%\end{thebibliography}
\end{document}